\newtheorem{theorem}{Theorem}[section]
\newtheorem{lemma}[theorem]{Lemma}
\newtheorem{proposition}[theorem]{Proposition}
\newtheorem{corollary}[theorem]{Corollary}
\theoremstyle{definition}
\newtheorem{assumption}[theorem]{Assumption}
\theoremstyle{remark}
\newtheorem{remark}[theorem]{Remark}
\newcommand{\supp}{supp}
\newcommand{\Dom}{Dom}
\newcommand{\Ric}{Ric}
\newcommand{\E}{E}
\newcommand{\R}{\mathbb{R}}
\title[Recovering Riemannian Geometry from Diffusion]
{Recovering Riemannian Geometry from Diffusion}
\author{Amandip Sangha}
\address{The Climate and Environmental Research Institute NILU}
\email{asan@nilu.no}
\begin{document}

\begin{abstract}
We present an intrinsic reconstruction of Riemannian geometry from a symmetric,
strongly local diffusion semigroup.
Starting from a diffusion operator and its associated first- and second-order
diffusion calculus, we recover the full weighted Riemannian structure of the
underlying manifold.
In particular, we show that the carré du champ determines a unique smooth
Riemannian metric, that the iterated carré du champ encodes curvature, and that
the symmetry of the diffusion fixes the Levi--Civita connection and reference
measure.
As a consequence, the diffusion semigroup determines the global Riemannian
manifold uniquely up to isometry.
The results provide an information-theoretic perspective on differential
geometry in which geometric structure emerges from the intrinsic behavior of
diffusion, without assuming any prior metric or coordinate description.
\end{abstract}


\maketitle
\tableofcontents

\section{Introduction}

Diffusion processes describe the evolution of information under random motion.
They arise naturally in physics, probability, and analysis, modeling phenomena
such as heat flow, particle dispersion, and stochastic transport.
Mathematically, diffusion is encoded by Markov semigroups and their generators,
which act as information channels transforming initial data into progressively
less localized states.

A classical insight of geometric analysis is that diffusion reflects the geometry
of the space on which it acts.
On a Riemannian manifold, the Laplace--Beltrami operator governs diffusion, and
geometric quantities such as distance, curvature, and volume influence how
functions evolve under the heat flow.
Traditionally, this relationship is studied by assuming a Riemannian structure
and analyzing the associated differential operator.

In this paper we reverse this perspective.
Rather than starting from a metric and studying its diffusion, we begin with a
symmetric, strongly local diffusion semigroup and ask what geometric structure
it intrinsically encodes.
Our guiding principle is that geometry should be viewed as the structure that
governs the flow of information under diffusion.
The way diffusion dissipates information at first and second order already
contains the data of a Riemannian manifold.

The central objects in our approach are the carré du champ operator and its
iterated counterpart in the sense of Bakry--\'Emery.
These operators form an intrinsic first- and second-order calculus associated
with the diffusion semigroup.
We show that this calculus is sufficient to reconstruct the full Riemannian
package: a smooth metric, its Levi--Civita connection, curvature, and the
reference measure with respect to which the diffusion is symmetric.

A key feature of the framework is that no Riemannian metric, coordinate system,
or heat kernel representation is assumed a priori.
The metric emerges canonically from the first-order diffusion calculus, while
curvature emerges from the second-order behavior.
Symmetry of the diffusion rigidly selects the Levi--Civita connection and fixes
the reference measure.
As a result, the diffusion semigroup determines the underlying manifold uniquely
up to Riemannian isometry.

Traditionally, diffusion processes on manifolds are constructed \emph{from} an
underlying geometric structure: a Riemannian metric determines a Laplace--Beltrami
operator, which in turn generates a heat semigroup.
The present work reverses this ontological direction.
We begin with a primitive diffusion semigroup $(P_t)_{t\ge0}$, assumed only to
satisfy minimal analytic and locality properties, and show that the full
Riemannian geometry of the underlying space can be reconstructed from this
diffusion data alone.
In this sense, geometry is not an a priori input but an emergent structure
encoded in the infinitesimal behavior of the diffusion.

More precisely, the reconstruction proceeds through the Bakry--\'Emery
$\Gamma$--calculus.
Starting from the semigroup $(P_t)$, we extract the carré du champ operator
$\Gamma$ as the first-order infinitesimal covariance of the diffusion.
This bilinear form canonically defines a smooth co-metric on the cotangent
bundle, whose inverse yields a Riemannian metric $g$.
Higher-order diffusion data encoded in $\Gamma_2$ then recover curvature, the
Levi--Civita connection, and the natural reference measure, culminating in the
identification of the generator as a weighted Laplace--Beltrami operator.
Thus the logical chain
\[
\text{diffusion semigroup } P_t
\;\longrightarrow\;
\text{Bakry--\'Emery calculus } (\Gamma,\Gamma_2)
\;\longrightarrow\;
\text{Riemannian geometry } (g,\nabla,\mu)
\]
is made fully explicit and rigorous.

This work forms part of a broader information-theoretic approach to geometry.
In earlier work, smooth manifold structure itself was shown to be recoverable
from entropy and information-theoretic probes of diffusion
\cite{SanghaEntropySmooth}.
Here we build on that foundation and show that, once smoothness is available,
the full Riemannian geometry is already encoded in the intrinsic behavior of
diffusion.
The resulting viewpoint places diffusion calculus, rather than coordinates,
spectral data, or variational principles, at the foundation of differential
geometry.

\section{Preliminaries}

\subsection{Diffusion semigroup}\label{subsec:diffusion-semigroup}

Throughout we fix a smooth, connected manifold $M$ and a $\sigma$--finite Borel measure $\mu$
with full support.\footnote{Full support is used only to avoid degenerate quotients; it can be
dropped at the expense of working on $\mathrm{supp}(\mu)$.}

\medskip
\noindent\textbf{Standing diffusion assumptions.}
We are given a conservative symmetric Markov $C_0$--semigroup $(P_t)_{t\ge 0}$ on $L^2(M,\mu)$.
Let $L$ denote its (self-adjoint) generator:
\[
Lf := \lim_{t\downarrow 0}\frac{P_tf-f}{t},\qquad f\in \mathrm{Dom}(L)\subset L^2(M,\mu).
\]
We write $\mathcal E$ for the (closed) symmetric Dirichlet form associated to $(P_t)$, defined by
\[
\mathcal{E}(f,g):=\lim_{t\downarrow 0}\frac1t\,\langle f-P_tf,\; g\rangle_{L^2(\mu)},
\]
whenever the limit exists and is finite; we set $\mathcal D(\mathcal E):=\{f\in L^2(\mu):\mathcal E(f,f)<\infty\}$.
Equivalently, for $f\in\mathrm{Dom}(L)$ and $g\in\mathcal D(\mathcal E)$ one has
$\mathcal E(f,g)=-\langle Lf,g\rangle_{L^2(\mu)}$, and also
$\mathcal E(f,g)=\langle(-L)^{1/2}f,(-L)^{1/2}g\rangle_{L^2(\mu)}$.

\begin{assumption}[Dirichlet form / diffusion axioms]\label{ass:D}
We assume:
\begin{enumerate}
\item[(D1)] (\emph{Markov and conservative}) $0\le f\le 1 \Rightarrow 0\le P_tf\le 1$ and $P_t1=1$.
\item[(D2)] (\emph{Symmetry}) $\int_M (P_tf)\,g\,d\mu=\int_M f\,(P_tg)\,d\mu$ for all $f,g\in L^2(M,\mu)$.
\item[(D3)] (\emph{Strong locality}) the form $\mathcal{E}$ is strongly local, i.e.\ $\mathcal{E}(f,g)=0$
whenever $f$ is constant on a neighbourhood of $\supp(g)$.
\item[(D4)] (\emph{Regularity}) $\mathcal{E}$ is regular in the Dirichlet-form sense: $\mathcal{D}(\mathcal{E})\cap C_c(M)$
is dense both in $\mathcal{D}(\mathcal{E})$ (for the $\mathcal{E}_1$--norm) and in $C_c(M)$ (for $\|\cdot\|_\infty$).
\item[(D5)] (\emph{Smoothness of the invariant measure}) The reference measure $\mu$ is assumed to be a smooth positive measure,
i.e.\ locally of the form $\mu=\rho\,dx$ with $\rho\in C^\infty$, $\rho>0$.
\end{enumerate}
\end{assumption}

Carré du champ as infinitesimal decorrelation.
For smooth functions $f,g\in C^\infty(M)$ we define the carré du champ intrinsically
from the diffusion semigroup by
\begin{equation}\label{eq:Gamma-semigroup}
\Gamma(f,g)
:=
\lim_{t\downarrow 0}\frac{1}{2t}
\Big(
P_t(fg) - (P_t f)(P_t g)
\Big),
\end{equation}
whenever the limit exists.
We write $\Gamma(f):=\Gamma(f,f)$.

Because the paper aims to reconstruct \emph{Riemannian} objects, we need a smooth calculus for $\Gamma$
on a smooth core; we do \emph{not} assume a metric at this stage.

\begin{assumption}[Smooth first-order diffusion calculus]
For all $f,g\in C^\infty(M)$, the limit \eqref{eq:Gamma-semigroup} exists pointwise
and defines a function $\Gamma(f,g)\in C^\infty(M)$.
Moreover:
\begin{itemize}
\item $\Gamma$ is symmetric and bilinear;
\item $\Gamma(f,g)(x)$ depends only on the first-order jets $df(x),dg(x)$;
\item $\Gamma$ is local: if $g$ is constant near $\supp(f)$ then $\Gamma(f,g)=0$;
\item $\Gamma$ satisfies the Leibniz  rule pointwise on $M$, i.e. for all $f,g,h\in C^\infty(M)$,
\[
\Gamma(fg,h)=f\,\Gamma(g,h)+g\,\Gamma(f,h),
\qquad
\Gamma(h,fg)=f\,\Gamma(h,g)+g\,\Gamma(h,f).
\]

\item (One-variable chain rule) For every smooth $\phi:\mathbb{R}\to\mathbb{R}$,
\[
\Gamma(\phi\circ f,g)=\phi'(f)\,\Gamma(f,g)
\quad\text{and}\quad
\Gamma(f,\phi\circ g)=\phi'(g)\,\Gamma(f,g).
\]
\end{itemize}
\end{assumption}
\begin{remark}
Assumption~2.2 is a smoothness hypothesis on the diffusion.
It holds, for example, when $L$ is a smooth second--order diffusion operator
and $\mathcal A\subset C^\infty(M)$ is a dense algebraic core, invariant under
$(P_t)$, closed under products and composition, and such that
$\mathcal A\subset\Dom(L)$ and $L\mathcal A\subset\mathcal A$.
All subsequent identities are understood on this fixed smooth core.
\end{remark}

All subsequent geometric reconstructions (metric, connection, curvature, measure)
will be derived solely from the diffusion semigroup, the Dirichlet form, and the
first-order calculus encoded by $\Gamma$; no local coordinate or heat-kernel
representation of $L$ is assumed.

\begin{lemma}[Multivariate chain rule]\label{lem:multichain}
Assume the one-variable chain rule in Assumption~2.2, together with bilinearity and
the Leibniz rule.
Then for every smooth $\Phi:\mathbb{R}^m\to\mathbb{R}$ and every
$F=(f_1,\dots,f_m)\in C^\infty(M;\mathbb{R}^m)$,
\[
\Gamma(\Phi\circ F,\,g)
=
\sum_{a=1}^m (\partial_a\Phi)(F)\,\Gamma(f_a,g),
\qquad g\in C^\infty(M).
\]
\end{lemma}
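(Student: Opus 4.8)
The plan is to prove the multivariate chain rule by induction on the number of arguments $m$, using the one-variable chain rule, bilinearity, and the Leibniz rule as the only inputs. The key algebraic trick is the standard one: reduce the multivariate statement to the two-variable case (i.e.\ $\Phi:\mathbb R^2\to\mathbb R$), since a general $\Phi$ can be built up by treating $f_m$ as a fixed extra variable and applying the $m-1$ case with parameters. Concretely, once the two-variable identity $\Gamma(\Phi(u,v)\circ(f,h),g)=(\partial_1\Phi)(f,h)\,\Gamma(f,g)+(\partial_2\Phi)(f,h)\,\Gamma(h,g)$ is established, an easy induction handles arbitrary $m$.

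For the base case $m=1$ the statement is exactly the one-variable chain rule in Assumption~2.2, so there is nothing to prove. The heart of the matter is the two-variable case. Here I would first establish the identity for \emph{polynomials} $\Phi\in\mathbb R[u,v]$ directly: for monomials $u^jv^k$ the result follows from iterating the Leibniz rule together with the one-variable chain rule applied to $\phi(t)=t^j$ and $\phi(t)=t^k$; bilinearity then extends this to all polynomials. This gives
\[
\Gamma\big(\,F^{\,*}\Phi,\;g\,\big)=(\partial_1\Phi)(f,h)\,\Gamma(f,g)+(\partial_2\Phi)(f,h)\,\Gamma(h,g)
\]
for every polynomial $\Phi$, where I abbreviate $F^{\,*}\Phi:=\Phi\circ(f,h)$. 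The remaining — and genuinely delicate — step is to pass from polynomials to arbitrary smooth $\Phi$. The mechanism is locality plus the first-order-jet dependence of $\Gamma$: fix a point $x\in M$; since $\Gamma(\cdot,g)(x)$ depends only on the first-order jet at $x$, it suffices to match $F^{\,*}\Phi$ and its first-order Taylor polynomial in the variables $(u,v)$ around $(f(x),h(x))$ up to first order at $x$. More carefully, one writes a second-order Taylor expansion with remainder, $\Phi(u,v)=P(u,v)+R(u,v)$ with $P$ the degree-$\le 1$ Taylor polynomial of $\Phi$ at $(f(x),h(x))$ and $R$ vanishing to second order there; $R\circ(f,h)$ then has vanishing first-order jet at $x$, so $\Gamma(R\circ(f,h),g)(x)=0$, while $\Gamma(P\circ(f,h),g)(x)$ gives exactly the claimed right-hand side by the polynomial case and bilinearity. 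One must be slightly careful because the Taylor remainder $R$ is only $C^{\infty}$ (not polynomial) — but that is fine, since the vanishing-jet argument uses only that $R\circ(f,h)$ has zero differential at $x$, which in turn uses the chain rule for ordinary smooth functions $M\to\mathbb R^2\to\mathbb R$ and the fact that $R$ has zero gradient at $(f(x),h(x))$.

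The main obstacle is exactly this last passage from polynomials to smooth functions: one needs the precise statement that $\Gamma(u,g)(x)$ depends only on $du(x)$, and one must verify that a smooth function on $M$ with vanishing differential at $x$ is annihilated by $\Gamma(\cdot,g)$ at $x$ — which follows from the jet-dependence hypothesis since such a function has the same first-order jet at $x$ as the zero function, and $\Gamma(0,g)=0$ by bilinearity. Everything else is bookkeeping: the induction on $m$ then goes by writing $\Phi(y_1,\dots,y_m)=\Psi(y_1,\dots,y_{m-1},y_m)$, applying the $(m{-}1)$-variable case to $\Psi(\cdot,\dots,\cdot,f_m)$ (legitimate since for each fixed last slot it is a smooth function of $m-1$ variables, and the extra dependence on $f_m$ contributes via a further two-variable application), and collecting terms; alternatively one can run the same polynomial-approximation argument directly in $m$ variables, matching first-order jets at each point, which avoids the slightly fussy reparametrization and is the cleaner route to write up.
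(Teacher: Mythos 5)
Your proposal is correct, and its decisive step---first-order Taylor expansion of $\Phi$ at $F(x)$ with the remainder killed by the jet-dependence of $\Gamma(\cdot,g)(x)$ and the affine part handled by bilinearity together with $\Gamma(\mathrm{const},g)=0$---is exactly the paper's proof, carried out directly in $m$ variables as you suggest in your closing remark. The induction on $m$ and the separate polynomial/monomial case are superfluous scaffolding: since the Taylor polynomial you ultimately match is affine, only bilinearity and the annihilation of constants (derived from the Leibniz rule via $\Gamma(\mathbf{1},g)=2\,\Gamma(\mathbf{1},g)$) are ever needed.
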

\begin{proof}
Fix $x\in M$ and $g\in C^\infty(M)$. Write $F=(f_1,\dots,f_m)$ and set
\[
u := F(x) = \bigl(f_1(x),\dots,f_m(x)\bigr)\in\R^m.
\]
Let
\[
\ell_x(y)
:= \Phi(u) + \sum_{a=1}^m (\partial_a\Phi)(u)\,\bigl(f_a(y)-f_a(x)\bigr),
\qquad y\in M,
\]
i.e. $\ell_x$ is the first-order Taylor polynomial of $\Phi\circ F$ at $x$ expressed in terms of the functions $f_a$.

Define the remainder
\[
R_x := \Phi\circ F - \ell_x.
\]
In local coordinates near $x$, Taylor's theorem for $\Phi:\R^m\to\R$ gives, for $y$ near $x$,
\begin{align}
\Phi\bigl(F(y)\bigr)
&= \Phi(u) + \sum_{a=1}^m (\partial_a\Phi)(u)\,\bigl(f_a(y)-f_a(x)\bigr)
+ \sum_{a,b=1}^m \bigl(f_a(y)-f_a(x)\bigr)\bigl(f_b(y)-f_b(x)\bigr)\,H_{ab}(y),
\label{eq:taylor-mv}
\end{align}
where each $H_{ab}$ is a smooth function (one may take
$H_{ab}(y)=\int_0^1 (1-s)\,(\partial_{ab}\Phi)\bigl(u+s(F(y)-u)\bigr)\,ds$).
Comparing with the definition of $\ell_x$ shows
\[
R_x(y) = \sum_{a,b=1}^m \bigl(f_a(y)-f_a(x)\bigr)\bigl(f_b(y)-f_b(x)\bigr)\,H_{ab}(y).
\]
In particular,
\[
R_x(x)=0,
\qquad
dR_x(x)=0,
\]
so $R_x$ has vanishing $1$-jet at $x$.

By Assumption~2.2, $\Gamma(\cdot,\cdot)(x)$ depends only on first-order jets.
Since $R_x$ and the constant function $0$ have the same $1$-jet at $x$, we have
\[
\Gamma(R_x,g)(x)=\Gamma(0,g)(x)=0,
\]
hence
\[
\Gamma(\Phi\circ F,g)(x)=\Gamma(\ell_x,g)(x).
\]

First, constants have zero carr\'e du champ with anything: letting $\mathbf 1$ be the constant function $1$,
the Leibniz rule gives
\[
\Gamma(\mathbf 1,g)=\Gamma(\mathbf 1\cdot \mathbf 1,g)
= \mathbf 1\,\Gamma(\mathbf 1,g)+\mathbf 1\,\Gamma(\mathbf 1,g)
= 2\,\Gamma(\mathbf 1,g),
\]
so $\Gamma(\mathbf 1,g)=0$, and thus $\Gamma(c,g)=\Gamma(c\mathbf 1,g)=c\,\Gamma(\mathbf 1,g)=0$ for all constants $c$.
Therefore,
\begin{align*}
\Gamma(\ell_x,g)(x)
&= \Gamma\!\left(\Phi(u) + \sum_{a=1}^m (\partial_a\Phi)(u)\,\bigl(f_a-f_a(x)\bigr),\, g\right)\!(x) \\
&= \sum_{a=1}^m (\partial_a\Phi)(u)\,\Gamma\bigl(f_a-f_a(x),\,g\bigr)(x)
\qquad \text{(bilinearity, and $\Gamma(\Phi(u),g)=0$)} \\
&= \sum_{a=1}^m (\partial_a\Phi)(u)\,\Gamma(f_a,g)(x),
\end{align*}
since $\Gamma(f_a(x),g)=0$ for the constant function $f_a(x)$.

\medskip
Combining the steps yields, for every $x\in M$,
\[
\Gamma(\Phi\circ F,g)(x)=\sum_{a=1}^m (\partial_a\Phi)\bigl(F(x)\bigr)\,\Gamma(f_a,g)(x).
\]
As $x$ was arbitrary, this holds pointwise on $M$.
\end{proof}

\begin{proposition}[Equivalence with generator formula]
If $f,g\in C^\infty(M)$ belong to $\mathrm{Dom}(L)$ and $fg\in\mathrm{Dom}(L)$, then
the carré du champ defined by \eqref{eq:Gamma-semigroup} satisfies
\[
\Gamma(f,g)
=
\frac12\big(L(fg)-f\,Lg-g\,Lf\big).
\]    
\end{proposition}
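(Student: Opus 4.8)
The plan is to differentiate the identity $P_t(fg) - (P_tf)(P_tg)$ at $t=0$, exploiting the fact that under the stated hypotheses all the relevant time-derivatives exist. First I would write, for fixed $t>0$, the algebraic decomposition
\[
P_t(fg) - (P_tf)(P_tg)
=
\bigl(P_t(fg) - fg\bigr) - \bigl((P_tf)(P_tg) - fg\bigr),
\]
and then handle the two brackets separately. Dividing by $2t$ and letting $t\downarrow 0$, the first bracket contributes $\tfrac12 L(fg)$ by definition of the generator, using $fg\in\mathrm{Dom}(L)$.

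For the second bracket I would insert a cross term to telescope the product difference:
\[
(P_tf)(P_tg) - fg
=
(P_tf)(P_tg) - f\,(P_tg) + f\,(P_tg) - fg
=
\bigl(P_tf - f\bigr)(P_tg) + f\bigl(P_tg - g\bigr).
\]
Dividing by $t$ and passing to the limit, the factor $(P_tf-f)/t \to Lf$ while $P_tg \to g$ by strong continuity of the semigroup, giving $g\,Lf$; symmetrically the other piece gives $f\,Lg$. Hence $\tfrac{1}{2t}\bigl((P_tf)(P_tg)-fg\bigr) \to \tfrac12\bigl(g\,Lf + f\,Lg\bigr)$, and subtracting yields
\[
\Gamma(f,g)
=
\tfrac12\bigl(L(fg) - f\,Lg - g\,Lf\bigr),
\]
as claimed.

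The main technical point to be careful about is the \emph{mode of convergence}: the generator limits $(P_tf-f)/t\to Lf$ hold a priori in $L^2(\mu)$, whereas \eqref{eq:Gamma-semigroup} is asserted to hold pointwise. So the step that needs justification is that the products $(P_tf-f)/t\cdot P_tg$ and $f\cdot(P_tg-g)/t$ converge (pointwise, or at least in the sense in which the $\Gamma$-limit is taken) to $g\,Lf$ and $f\,Lg$ respectively. Under Assumption~2.2 the left-hand side of \eqref{eq:Gamma-semigroup} converges pointwise to a smooth function, and $P_tf\to f$, $P_tg\to g$ pointwise for $f,g$ in the smooth core by regularity together with continuity of the semigroup on $C_c$-type spaces; combined with $f,g\in C^\infty(M)$ this pins down the pointwise limit, and since the $L^2$-limit of $(P_tf-f)/t$ is $Lf$, the two limits must agree $\mu$-a.e., hence everywhere by smoothness. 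I would phrase the argument so that it only uses: (i) $fg\in\mathrm{Dom}(L)$ to identify the first bracket, (ii) strong continuity for $P_tg\to g$, and (iii) the hypothesis that the $\Gamma$-limit exists pointwise, so that the decomposition above forces the stated value; this keeps the proof short and avoids any heat-kernel or coordinate input, consistent with the paper's standing conventions.
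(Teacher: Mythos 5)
Your proposal is correct and follows essentially the same route as the paper: the paper uses the algebraically equivalent decomposition $A_t=(P_t(fg)-fg)-f(P_tg-g)-g(P_tf-f)-(P_tf-f)(P_tg-g)$, with the quadratic remainder killed by Cauchy--Schwarz (your telescoping merely absorbs that remainder into the factor $P_tg$), and it resolves the mode-of-convergence issue exactly as you indicate, by matching the $L^1$/$L^2$ limit with the pointwise limit $\mu$-a.e.\ and upgrading to everywhere by smoothness of both sides.
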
 
\begin{proof}
Fix $f,g\in C^\infty(M)\cap\Dom(L)$ with $fg\in\Dom(L)$. For $t>0$ set
\[
A_t := P_t(fg) - (P_tf)(P_tg).
\]
Add and subtract $fg$, $fP_tg$, and $gP_tf$ to obtain the exact algebraic identity
\begin{align}
A_t
&= \bigl(P_t(fg)-fg\bigr) - f\bigl(P_tg-g\bigr) - g\bigl(P_tf-f\bigr) - \bigl(P_tf-f\bigr)\bigl(P_tg-g\bigr).
\label{eq:At-decomp}
\end{align}
Divide by $2t$:
\begin{align}
\frac{1}{2t}A_t
&= \frac12\,\frac{P_t(fg)-fg}{t}
-\frac12\,f\,\frac{P_tg-g}{t}
-\frac12\,g\,\frac{P_tf-f}{t}
-\frac12\,\frac{(P_tf-f)(P_tg-g)}{t}.
\label{eq:At-over-2t}
\end{align}

Since $h\in\Dom(L)$ iff $(P_th-h)/t\to Lh$ in $L^2(\mu)$ as $t\downarrow0$, the first three terms on the right of \eqref{eq:At-over-2t} converge in $L^2(\mu)$ to
\[
\frac12\,L(fg) - \frac12\, f\,Lg - \frac12\, g\,Lf.
\]
It remains to show that the last term in \eqref{eq:At-over-2t} tends to $0$ in $L^1(\mu)$ (hence in measure). Using Cauchy--Schwarz and the $L^2$-generator expansions,
\begin{align*}
\left\|\frac{(P_tf-f)(P_tg-g)}{t}\right\|_{L^1(\mu)}
&\le \frac{1}{t}\,\|P_tf-f\|_{L^2(\mu)}\,\|P_tg-g\|_{L^2(\mu)} \\
&= \frac{1}{t}\,\bigl(t\|Lf\|_{L^2(\mu)}+o(t)\bigr)\,\bigl(t\|Lg\|_{L^2(\mu)}+o(t)\bigr) \\
&= t\,\|Lf\|_{2}\|Lg\|_{2} + o(t)\xrightarrow[t\downarrow0]{}0.
\end{align*}
Therefore
\[
\frac{1}{2t}\Bigl(P_t(fg)-(P_tf)(P_tg)\Bigr)
\;\longrightarrow\;
\frac12\Bigl(L(fg)-f\,Lg-g\,Lf\Bigr)
\quad\text{in }L^1(\mu).
\]
By definition \eqref{eq:At-over-2t} of $\Gamma$ via the semigroup limit, the left-hand side converges pointwise to $\Gamma(f,g)$, and by Assumption~2.2 this limit is a smooth function. Hence the above $L^1$ convergence forces
\[
\Gamma(f,g)=\frac12\Bigl(L(fg)-f\,Lg-g\,Lf\Bigr)
\quad\mu\text{-a.e.}
\]
Since both sides are smooth functions on $M$, equality holds everywhere on $M$.
\end{proof}

\subsection{Information-theoretic quantities}
The information-theoretic quantities introduced in this subsection provide
motivation and connections to earlier work.
They are not used technically in the intrinsic geometric reconstruction
carried out in Sections 3 and 4.

Let $(P_t)_{t\ge 0}$ denote the symmetric, strongly local diffusion semigroup on $M$ introduced
above. For a probability measure $\mu$ on $M$, we write $P_t \mu$ for its image under the semigroup, $P_t\mu(A) = \int_{A}P_t 1_{A}(x)d\mu(x).$
All information-theoretic quantities used in this paper are defined in terms of such distributions
and do not require any underlying metric.

\medskip
\noindent\textbf{Entropy.}
If $\rho$ is a probability density with respect to the reference measure $\mu$, we define
\[
H(\rho) := - \int_M \rho \log \rho \, d\mu.
\]
If $X$ is a random variable on $M$ with density $\rho$, we write $H(X):=H(\rho)$.

\medskip
\noindent\textbf{Conditional entropy.}
Let $Y$ be an $M$--valued random variable and $L\in\{0,1\}$ a binary random 
variable. We write $p_{L,Y}(l,y)$ for the joint density of $(L,Y)$ with 
respect to the product measure $\lambda \otimes \mu$, where $\lambda$ is 
counting measure on $\{0,1\}$. The conditional entropy of $L$ given $Y$ is
\[
H(L \mid Y)
:= - \sum_{\ell\in\{0,1\}} \int_M p_{L,Y}(\ell,y)\,
   \log p_{L \mid Y}(\ell \mid y)\, d\mu(y),
\]
where $p_{L\mid Y}$ is the conditional density.

\medskip
\noindent\textbf{Relative entropy.}
For two probability densities $\rho$ and $\sigma$ with respect to $\mu$, the relative entropy
(Kullback--Leibler divergence) \cite{CoverThomas2006} is
\[
D(\rho \,\|\, \sigma)
:= \int_M \rho \log\!\left(\frac{\rho}{\sigma}\right) d\mu.
\]
More generally, if $\nu$ and $\eta$ are probability measures with $\nu \ll \eta$, we write
\[
D(\nu \,\|\, \eta)
:= \int_M \log\!\left(\frac{d\nu}{d\eta}\right) d\nu.
\]
In our applications, relative entropy will be evaluated between measures obtained from $(P_t)$ and
suitable reference measures.

\medskip
\noindent\textbf{Mutual information.}
If $(U,V)$ is a pair of random variables with joint density $\rho_{U,V}$ and marginals
$\rho_U and \rho_V$, the mutual information \cite{CoverThomas2006} is
\[
I(U;V)
= H(U) - H(U\mid V)
= D\big(\rho_{U,V} \,\big\|\, \rho_U \otimes \rho_V\big).
\]
Monotonicity of $I(U;V)$ under the action of $(P_t)$ (data-processing inequality) underlies the
entropy dissipation identities used in \cite{Sangha2025Isoperimetry}.

\medskip

In later sections we will specialise these general notions to distributions obtained by restricting
the diffusion to two-dimensional slices and comparing them to flat reference diffusions.

\subsection{The noisy-label model}
\label{subsec:noisy-label-model}
A central information-theoretic object in our framework is the binary noisy-label experiment
introduced in \cite{Sangha2025Isoperimetry}. Let $E\subset M$ be a measurable set with indicator
function $1_E$. We define the binary random variable
\[
B := 1_E(X),
\]
where $X$ is an $M$--valued random variable with law $\mu$, $P(X\in A)=\mu(A)$. The random variable $B\in\{0,1\}$ is observed
only after diffusion: for $t>0$ we define $Y_t := X_t$, where $X_t$ has law $P_t\mu$.

The joint distribution of $(B,Y_t)$ is
\[
\mathbb{P}(B=\beta,\, Y_t\in A)
= \int_{E_\beta} P_t(x,A)\, d\mu(x),
\qquad \beta\in\{0,1\},
\]
where $E_1=E$ and $E_0=E^c = M\setminus E$. Moreover,
\[
\mathbb{P}(B=\beta \mid Y_t \in A)
= \frac{1}{\mu(E_\beta)} \int_A P_t 1_{E_\beta}(y)\, d\mu(y),
\qquad A\subset M\ \text{measurable}.
\]

The conditional entropy $H(B\mid Y_t)$ measures the uncertainty in the label after observing
the diffused point $Y_t$. 

\subsection{Mutual information under diffusion flow}

Let $X$ be an $M$--valued random variable with law $\mu$, and let $X_t$ have law $P_t\mu$.
For any random variable $U$ (for instance the binary label $B=1_E(X)$), the pair $(U,X_t)$
has joint density $p_{U,X_t}$ and marginals $p_U$, $p_{X_t}$. The mutual information is
\[
I(U;X_t)
= D\big(p_{U,X_t} \,\big\|\, p_U \otimes p_{X_t}\big)
= H(U) - H(U\mid X_t).
\]

Because $X \mapsto X_t$ is a Markov channel generated by the diffusion semigroup $(P_t)$,
the data--processing inequality \cite{CoverThomas2006} gives
\begin{equation}\label{eq:MI-monotone}
I(U;X_t) \le I(U;X),
\qquad t \ge 0,
\end{equation}
for every choice of $U$. Thus mutual information is nonincreasing along the diffusion.

In the binary noisy--label case $U=B=1_E(X)$ from Section~3.1, we have
\[
I(B;X_t) = H(B) - H(B\mid X_t).
\]
The small--time behaviour of $H(B\mid X_t)$ will be analysed later in the paper by an
information dissipation identity, following \cite{Sangha2025Isoperimetry}.

\subsection{Information dissipation identity}
Let $B = 1_E(X)$ be the binary random variable from \ref{subsec:noisy-label-model} and let $Y_t := X_t$ have law $P_t\mu$.
The conditional entropy satisfies
\[
H(B\mid Y_t)
= - \sum_{\beta\in\{0,1\}} \int_M p_{B,Y_t}(\beta,y)
    \log p_{B\mid Y_t}(\beta\mid y)\, d\mu(y).
\]

Since $p_{Y_t\mid B=\beta}(y) = P_t1_{E_\beta}(y)/\mu(E_\beta)$, we have
\[
p_{B\mid Y_t}(1\mid y)
= \frac{P_t 1_E(y)}{P_t 1_E(y) + P_t 1_{E^c}(y)}
= P_t 1_E(y),
\]
because $P_t 1_E + P_t 1_{E^c} = P_t 1 = 1$.

Hence
\[
H(B\mid Y_t)
= -\int_M\!\left[
P_t 1_E(y)\log P_t 1_E(y) + (1-P_t 1_E(y))\log(1-P_t 1_E(y))
\right] d\mu(y).
\]

Differentiating under the integral and using $L1=0$ gives the
\emph{information dissipation identity} \cite{Sangha2025Isoperimetry},
\begin{equation}\label{eq:ID-identity}
\frac{d}{dt}H(B\mid Y_t)
= - \int_M 
     \frac{\Gamma(P_t 1_E)(y)}
          {P_t 1_E(y)\, (1 - P_t 1_E(y))}
     \, d\mu(y),
\end{equation}
where $\Gamma$ is the Carré du champ operator associated with the generator $L$.

Since $P_t1_E$ is $C^\infty$ for every $t>0$ \cite{BakryGentilLedoux2014, FukushimaOshimaTakeda2011} and satisfies $0<P_t1_E<1$ by the strong maximum
principle for symmetric diffusion semigroups \cite{FriedmanParabolicPDE}, the denominator in
\eqref{eq:ID-identity} is strictly positive everywhere and the integrand is smooth. Hence the
integral in \eqref{eq:ID-identity} is finite for all $t>0$.

Identity \eqref{eq:ID-identity} expresses the first-order loss of label information entirely in
terms of $\Gamma$ and the smoothed indicator $P_t 1_E$.

\subsection{Smooth structure from information} 
In this paper we assume that $M$ carries a smooth structure, so that the
diffusion semigroup $(P_t)$ acts on $C^\infty(M)$; no heat kernel asymptotics are assumed unless explicitly stated. However, smoothness itself can be recovered from
purely information-theoretic data.  In our preceding companion paper \cite{SanghaEntropySmooth},
the small--scale quadratic entropy response of local information probes
determines a unique $C^\infty$--structure on any topological manifold.
Entropy coordinate charts are shown to be classical smooth charts, and the
entropy--smooth functions coincide with the ordinary smooth functions.

Thus the present paper may be viewed as the second layer of a unified
information--theoretic reconstruction program: the smooth structure of $M$
may be recovered from local entropy behaviour, and on top of this layer the
Riemannian metric, Levi--Civita connection, curvature operator, and global
isometry class of $(M,g)$ may be recovered from the small--time information
flow of the diffusion semigroup.

\section{Recovering the Riemannian Metric}

\subsection{The intrinsic distance associated with diffusion}

The carré du champ $\Gamma$ induces an intrinsic distance on $M$ defined by
the dual variational formula
\begin{equation}\label{eq:dual-dGamma}
d_\Gamma(x,y)
:=
\sup\Bigl\{
u(x)-u(y):
u\in C_c^\infty(M),\ \Gamma(u,u)\le 1\ \text{a.e.}
\Bigr\}.
\end{equation}
This is the standard intrinsic metric associated with strongly local Dirichlet
forms \cite{FukushimaOshimaTakeda2011}.

\section{Recovering the Riemannian Structure from Information Diffusion}

\subsection{Recovering the metric tensor}\label{subsec:recover-metric}

In this subsection we show that the carré du champ $\Gamma$ determines a unique smooth
Riemannian metric on $M$.  Throughout we assume only Assumptions~2.1--2.2.

\medskip
\noindent\textbf{Well-defined bilinear form on cotangent vectors.}
Fix $x\in M$.  By Assumption~2.2, for $f,g\in C^\infty(M)$ the value $\Gamma(f,g)(x)$ depends
only on the differentials $df(x),dg(x)$.  Hence we may define a symmetric bilinear form
\begin{equation}\label{eq:cometric-def}
\langle \alpha,\beta\rangle_{g_x^{-1}}
\;:=\;
\Gamma(f,g)(x),
\end{equation}
where $f,g\in C^\infty(M)$ are any smooth functions satisfying $df(x)=\alpha$ and $dg(x)=\beta$.
This is well-defined: if $f_1,f_2$ satisfy $df_1(x)=df_2(x)$ then $\Gamma(f_1-f_2,h)(x)=0$
for all $h$, hence $\Gamma(f_1,h)(x)=\Gamma(f_2,h)(x)$, and similarly in the second entry.
Bilinearity and symmetry follow from the bilinearity and symmetry of $\Gamma$.

\medskip
\noindent\textbf{Local coordinate description.}
Let $(U;\,x^1,\dots,x^n)$ be a coordinate chart and write, on $U$,
\begin{equation}\label{eq:Gij}
G^{ij} := \Gamma(x^i,x^j).
\end{equation}
Since $\Gamma$ is smooth on smooth inputs (Assumption~2.2), the functions $G^{ij}$ are smooth
on $U$.  Moreover, for any $f\in C^\infty(U)$ the chain rule in Assumption~2.2 gives
\begin{equation}\label{eq:Gamma-symbol}
\Gamma(f,g)
=
\sum_{i,j=1}^n \,(\partial_i f)\,(\partial_j g)\,G^{ij}
\qquad\text{on }U,
\end{equation}
by writing $f=F(x^1,\dots,x^n)$ and $g=H(x^1,\dots,x^n)$ locally and applying the chain rule
with $\Phi=F$ and $\Phi=H$ componentwise.
Thus $\Gamma$ is a first-order bilinear differential operator whose ``principal part'' in
coordinates is given by the symmetric matrix $(G^{ij})$.

\medskip
\noindent\textbf{Non-degeneracy and inversion.}
To obtain a Riemannian (rather than sub-Riemannian) structure we impose the non-degeneracy
condition that for every $x\in M$ the bilinear form \eqref{eq:cometric-def} on $T_x^*M$
is positive definite.  Equivalently, in any coordinate chart the matrix $(G^{ij}(x))$ is
positive definite.  Under this assumption, we may define on $U$ the inverse matrix
\begin{equation}\label{eq:gij}
(g_{ij}) := (G^{ij})^{-1}.
\end{equation}
Since inversion is smooth on the cone of positive definite matrices, the coefficients
$g_{ij}$ are smooth on $U$.

\medskip
\noindent\textbf{Coordinate invariance.}
If $(y^1,\dots,y^n)$ is another coordinate system on $U$, then by the chain rule,
\[
\Gamma(y^a,y^b)
=
\sum_{i,j} (\partial_i y^a)(\partial_j y^b)\,\Gamma(x^i,x^j)
=
\sum_{i,j} (\partial_i y^a)(\partial_j y^b)\,G^{ij}.
\]
Thus $(G^{ab}_y)$ transforms as a $(2,0)$-tensor (a co-metric), and therefore its inverse
$(g_{ab})$ transforms as a $(0,2)$-tensor.  Hence the locally defined coefficients
$g_{ij}$ patch to a globally defined smooth symmetric $2$-tensor $g$ on $M$.

\begin{theorem}[Metric recovered from $\Gamma$]\label{thm:recover-metric-from-Gamma}
Assume in addition that the bilinear form \eqref{eq:cometric-def} is positive definite at every
$x\in M$.  Then there exists a unique smooth Riemannian metric $g$ on $M$ such that for all
$f,g\in C^\infty(M)$,
\begin{equation}\label{eq:Gamma-metric}
\Gamma(f,g)=\langle df,dg\rangle_{g^{-1}}.
\end{equation}
Equivalently, in local coordinates $(x^i)$ one has $\Gamma(x^i,x^j)=g^{ij}$.
\end{theorem}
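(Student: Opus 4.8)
The plan is to assemble the pieces set up in the discussion preceding the statement into a clean existence--and--uniqueness argument, the only genuinely new input being the positive-definiteness hypothesis. For existence, I would first observe that the construction following \eqref{eq:cometric-def} produces, at each $x\in M$, a symmetric bilinear form on $T_x^*M$ that varies smoothly with $x$: in a chart $(U;x^1,\dots,x^n)$ its matrix is $(G^{ij})=(\Gamma(x^i,x^j))$ from \eqref{eq:Gij}, which is smooth by Assumption~2.2. The positive-definiteness hypothesis says this is a smooth, fibrewise positive-definite section of $\mathrm{Sym}^2(TM)$, i.e. a co-metric on the cotangent bundle. Inverting fibrewise---a smooth operation on the open cone of positive-definite symmetric matrices---yields a smooth, fibrewise positive-definite section of $\mathrm{Sym}^2(T^*M)$, which is exactly a Riemannian metric, with local coefficients $(g_{ij})=(G^{ij})^{-1}$ as in \eqref{eq:gij}.

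Next I would check that this $g$ is globally well-defined, independent of the chart, and then verify the defining identity. For the first point, under a change of coordinates the $G^{ij}$ transform, by the multivariate chain rule of Lemma~\ref{lem:multichain}, exactly as the components of a $(2,0)$-tensor; hence their fibrewise inverses transform as a $(0,2)$-tensor, so the locally defined $g_{ij}$ agree on overlaps and patch to a single smooth symmetric tensor $g$ on $M$. For the identity \eqref{eq:Gamma-metric}, the chain-rule expansion \eqref{eq:Gamma-symbol} gives, in a chart, $\Gamma(f,g)=\sum_{i,j}(\partial_i f)(\partial_j g)\,G^{ij}$; since $(G^{ij})$ is by construction the co-metric $(g^{ij})$ in these coordinates, the right-hand side is precisely $\langle df,dg\rangle_{g^{-1}}$ expressed in coordinates. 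As both sides are coordinate-independent and the charts cover $M$, the identity holds on all of $M$, and taking $f=x^i$, $g=x^j$ recovers $\Gamma(x^i,x^j)=g^{ij}$.

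For uniqueness, suppose $g'$ is any Riemannian metric satisfying \eqref{eq:Gamma-metric}. Then in any chart $(g')^{ij}=\langle dx^i,dx^j\rangle_{(g')^{-1}}=\Gamma(x^i,x^j)=G^{ij}$, so the co-metrics of $g$ and $g'$ coincide in every chart, hence so do their inverses, giving $g'=g$.

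I do not expect a real obstacle: the content is essentially bookkeeping over what has already been proved. The point deserving the most care is the passage from the co-metric---which is what $\Gamma$ produces \emph{directly}, being a bilinear form on differentials---to the metric proper: one must use positive-definiteness to invert, confirm smoothness of the inversion on the positive cone, and confirm that the inverse carries the correct tensor weight so the local data glue. Relatedly, the assertion ``$\Gamma(x^i,x^j)=g^{ij}$'' should be read as: the matrix $\bigl(\Gamma(x^i,x^j)\bigr)$ is the \emph{inverse} of the metric matrix $(g_{ij})$ in those coordinates, i.e. $\Gamma$ recovers the co-metric and the metric only after fibrewise inversion.
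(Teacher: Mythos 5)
Your proposal is correct and follows essentially the same route as the paper: define the co-metric via $G^{ij}=\Gamma(x^i,x^j)$, use Lemma~\ref{lem:multichain} for the coordinate formula and the tensorial transformation law, invert fibrewise on the positive-definite cone, and conclude uniqueness from the fact that \eqref{eq:Gamma-metric} pins down the co-metric pointwise. The only cosmetic difference is in the uniqueness step, where the paper realizes arbitrary covectors at a point by cutting off linear combinations of coordinate functions with a bump function (so that the test functions lie in $C^\infty(M)$ globally), whereas you evaluate directly on the $x^i$; by locality of $\Gamma$ this is an inessential distinction.
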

\begin{proof}
Fix $x\in M$. By Assumption~2.2, for any $f,g\in C^\infty(M)$ the value $\Gamma(f,g)(x)$ depends
only on the differentials $df(x),dg(x)\in T_x^*M$. Hence the prescription
\[
\langle \alpha,\beta\rangle_{g^{-1}_x}:=\Gamma(f,g)(x),
\qquad df(x)=\alpha,\ dg(x)=\beta,
\]
defines a \emph{well-defined} symmetric bilinear form on $T_x^*M$ (cf.\ (5)).
By hypothesis, this bilinear form is positive definite for every $x$, so $g^{-1}_x$ is an inner product
on $T_x^*M$.

Let $(U;x^1,\dots,x^n)$ be a chart and define on $U$
\[
G^{ij}:=\Gamma(x^i,x^j).
\]
Since $\Gamma$ is smooth on smooth inputs (Assumption~2.2), each $G^{ij}\in C^\infty(U)$.
Moreover, for any $f,h\in C^\infty(U)$, writing $f=F(x^1,\dots,x^n)$ and $h=H(x^1,\dots,x^n)$
locally and applying the multivariate chain rule (Lemma~2.3) in each slot yields the coordinate formula (7):
\begin{equation}\label{eq:Gamma-coords}
\Gamma(f,h)=\sum_{i,j=1}^n (\partial_i f)(\partial_j h)\,G^{ij}
\qquad\text{on }U.
\end{equation}
Evaluating \eqref{eq:Gamma-coords} at $x\in U$ and taking $f=x^i$, $h=x^j$ gives
\[
\langle dx^i,dx^j\rangle_{g^{-1}} = \Gamma(x^i,x^j)=G^{ij},
\]
so $(G^{ij}(x))$ is the matrix of $g^{-1}_x$ in the coframe $(dx^i)_x$. Positivity of $g^{-1}_x$
is equivalent to $(G^{ij}(x))$ being symmetric positive definite, hence invertible. Define its inverse
\[
(g_{ij}) := (G^{ij})^{-1}.
\]
Since inversion is smooth on the cone of positive definite matrices, $g^{ij}\in C^\infty(U)$.

Let $(V;y^1,\dots,y^n)$ be another chart with $U\cap V\neq\varnothing$. On $U\cap V$, apply
Lemma~2.3 to each coordinate function $y^a=y^a(x^1,\dots,x^n)$ and use bilinearity of $\Gamma$:
\begin{align}
G^{(y)}_{ab}
:=\Gamma(y^a,y^b)
&=\sum_{i,j=1}^n (\partial_i y^a)(\partial_j y^b)\,\Gamma(x^i,x^j)
=\sum_{i,j=1}^n (\partial_i y^a)(\partial_j y^b)\,G^{(x)}_{ij}.
\label{eq:cometric-transform}
\end{align}
Thus the local matrices $(G^{ij})$ transform on overlaps as the components of a smooth $(2,0)$-tensor,
i.e.\ a globally defined smooth co-metric $g^{-1}\in C^\infty(M;\,S^2TM)$. Since each $g^{-1}_x$ is
positive definite, the pointwise inverse defines a smooth $(0,2)$-tensor
\[
g := (g^{-1})^{-1}\in C^\infty(M;\,S^2T^*M),
\]
whose local coefficients in the chart $(x^i)$ are exactly $(g_{ij})=(G^{ij})^{-1}$, and by
\eqref{eq:cometric-transform} these coefficients patch to a global tensor. Positivity implies $g$ is a
Riemannian metric.

Fix $x\in U$ and write $df(x)=(\partial_i f)(x)\,dx^i_x$ and $dh(x)=(\partial_j h)(x)\,dx^j_x$.
Then, since $g^{-1}(dx^i,dx^j)=G^{ij}$,
\[
\langle df,dh\rangle_{g^{-1}}(x)
=\sum_{i,j=1}^n (\partial_i f)(x)(\partial_j h)(x)\,G^{ij}(x)
=\Gamma(f,h)(x)
\]
by \eqref{eq:Gamma-coords}. Hence (9) holds pointwise on $M$.

Suppose $\tilde g$ is another smooth Riemannian metric with
\[
\Gamma(f,h)=\langle df,dh\rangle_{\tilde g^{-1}}
\qquad\forall f,h\in C^\infty(M).
\]
Fix $x\in M$ and $\alpha,\beta\in T_x^*M$. Choose a chart $(U;x^1,\dots,x^n)$ about $x$ and write
$\alpha=\alpha_i\,dx^i_x$, $\beta=\beta_j\,dx^j_x$. Let $\chi\in C_c^\infty(U)$ satisfy $\chi\equiv 1$
near $x$, and define
\[
f:=\chi\sum_{i=1}^n \alpha_i x^i,\qquad h:=\chi\sum_{j=1}^n \beta_j x^j.
\]
Then $df(x)=\alpha$ and $dh(x)=\beta$. Therefore
\[
\langle \alpha,\beta\rangle_{\tilde g^{-1}_x}
=\langle df(x),dh(x)\rangle_{\tilde g^{-1}_x}
=\Gamma(f,h)(x)
=\langle df(x),dh(x)\rangle_{g^{-1}_x}
=\langle \alpha,\beta\rangle_{g^{-1}_x}.
\]
Since $\alpha,\beta$ were arbitrary, $\tilde g^{-1}_x=g^{-1}_x$ for all $x$, hence $\tilde g=g$.
\end{proof}

\begin{remark}
At this stage no geodesics, exponential map, heat kernel asymptotics, or curvature notions
have been used.  The metric is recovered purely from the first-order diffusion calculus
encoded by $\Gamma$.
\end{remark}

\subsection{Curvature information from $\Gamma_2$}
\label{subsec:recover-curvature}

In this subsection we show that the curvature of the recovered Riemannian metric is encoded
intrinsically in the second-order diffusion calculus, via the iterated carré du champ.
No heat kernel asymptotics, small-time expansions, or directional entropy defects are used.

\medskip
\noindent\textbf{Bochner identity.}
Once the Riemannian metric $g$ has been recovered from $\Gamma$ (Section 4.1),
and the reference measure $\mu=\rho\,\mathrm{vol}_g$ has been recovered
(Section 4.4), the generator takes the form
\[
L=\Delta_g+\langle\nabla\log\rho,\nabla\cdot\rangle_g.
\]
In this setting, the Bochner identity reads
\begin{equation}\label{eq:Bochner-BE}
\Gamma_2(f)
=
\|\nabla^2 f\|_g^2
+
\big(\mathrm{Ric}_g+\nabla^2\log\rho\big)(\nabla f,\nabla f),
\end{equation}
where $\nabla^2\log\rho$ is the Hessian with respect to the Levi--Civita connection.

\begin{remark}[Generator representation of $\Gamma_2$]
In the presence of a generator $L$ and sufficient domain regularity, it is standard in the
Bakry--\'Emery calculus to define the iterated carr\'e du champ by
\[
\Gamma_2(f) := \tfrac12\,L\Gamma(f) - \Gamma(f,Lf),
\qquad f\in\mathcal A,
\]
where $\mathcal A$ is the smooth core from Assumption~2.2.
When $f,\Gamma(f)\in\Dom(L)$, this expression coincides with the semigroup representation
\[
\Gamma_2(f)
= \lim_{t\downarrow0}\frac{1}{2t}\Bigl(P_t\Gamma(f)-\Gamma(P_tf)\Bigr),
\]
see e.g.\ \cite[Chapter~3]{BakryGentilLedoux2014}.
In the present work we take the generator identity as the defining relation.
\end{remark}

Fix $x\in M$ and $v\in T_xM$.
Choose $f\in C^\infty(M)$ such that
\[
\nabla f(x)=v,
\qquad
\nabla^2 f(x)=0.
\]
Evaluating \eqref{eq:Bochner-BE} at $x$ gives
\[
\Gamma_2(f)(x)
=
\big(\mathrm{Ric}_g+\nabla^2\log\rho\big)(v,v).
\]
Thus the \emph{Bakry--\'Emery Ricci tensor} of the weighted manifold $(M,g,\mu)$,
\[
\mathrm{Ric}_\mu:=\mathrm{Ric}_g+\nabla^2\log\rho,
\]
is recovered pointwise by
\begin{equation}\label{eq:Ricmu-recovery}
\mathrm{Ric}_\mu(v,v)=\Gamma_2(f)(x),
\qquad
\nabla f(x)=v,\ \nabla^2 f(x)=0.
\end{equation}
Polarization in $v$ yields the full symmetric bilinear form $\mathrm{Ric}_\mu$.
In the special case $\rho\equiv\mathrm{const}$, one has $\mathrm{Ric}_\mu=\mathrm{Ric}_g$.

\medskip
\noindent\textbf{From Ricci to full curvature.}
The Ricci tensor, together with the Levi--Civita connection (recovered in the next subsection),
determines the full Riemann curvature tensor.
Equivalently, once the covariant Hessian $\nabla^2$ is available, the Riemann curvature operator
$R$ is recovered from the commutator identity
\[
(\nabla_X\nabla_Y - \nabla_Y\nabla_X - \nabla_{[X,Y]})\,\nabla f
=
R(X,Y)\nabla f,
\qquad
f\in C^\infty(M),
\]
which is intrinsic and involves only the connection.

\begin{theorem}[Levi--Civita connection recovered]\label{thm:recover-connection}
Under Assumptions~2.1--2.2 and the non-degeneracy condition of
Theorem~4.1, the diffusion semigroup uniquely determines the
Levi--Civita connection of the recovered Riemannian metric $g$.
\end{theorem}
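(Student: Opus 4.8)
The plan is to show that the Levi--Civita connection is already determined by the metric $g$ recovered in Theorem~4.1, and that $g$ in turn is determined by the diffusion semigroup. Since Theorem~4.1 produces a \emph{unique} smooth Riemannian metric $g$ on $M$ from $\Gamma$ (which is itself extracted from $(P_t)$ via \eqref{eq:Gamma-semigroup}), and since the fundamental theorem of Riemannian geometry associates to each Riemannian metric a unique torsion-free metric-compatible affine connection, the composite map
\[
(P_t)_{t\ge0}\;\longmapsto\;\Gamma\;\longmapsto\;g\;\longmapsto\;\nabla^{g}
\]
is well-defined and single-valued. So the only real content is to make the dependence explicit and to confirm that no additional choices enter.

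First I would recall the Koszul formula: for vector fields $X,Y,Z$ on $M$,
\[
2\,g(\nabla_X Y, Z)
= X\,g(Y,Z) + Y\,g(X,Z) - Z\,g(X,Y)
 + g([X,Y],Z) - g([X,Z],Y) - g([Y,Z],X).
\]
Every ingredient on the right-hand side is built from $g$ alone together with the smooth manifold structure (the Lie bracket of vector fields and directional derivatives of functions are part of the smooth structure, which is assumed). Because $g$ is positive definite, the left-hand side determines $\nabla_X Y$ uniquely: given $X,Y$, the one-form $Z\mapsto 2\,g(\nabla_X Y,Z)$ is prescribed, and non-degeneracy of $g$ lets us solve for the vector field $\nabla_X Y$. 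In coordinates this is the statement that the Christoffel symbols are
\[
\Gamma^{k}_{ij}
= \tfrac12\,g^{k\ell}\bigl(\partial_i g_{j\ell} + \partial_j g_{i\ell} - \partial_\ell g_{ij}\bigr),
\]
with $g_{ij}=(G^{ij})^{-1}$ and $G^{ij}=\Gamma(x^i,x^j)$, so the connection coefficients are smooth explicit functions of the $\Gamma$-data. Then I would invoke uniqueness in the fundamental theorem of Riemannian geometry to conclude that this $\nabla^{g}$ is the Levi--Civita connection and that it is the only torsion-free metric connection; combined with the uniqueness clause of Theorem~4.1, this gives that the diffusion semigroup determines $\nabla$ unambiguously.

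There is no serious obstacle here — the statement is essentially a corollary of Theorem~4.1 chained with a classical theorem — so the ``hard part'' is really just bookkeeping: being careful that the smooth structure (hence Lie brackets and coordinate derivatives) is genuinely available as input, as the paper stipulates in Section~2.6 and the opening of Section~3, and that positive-definiteness of $g$ (the non-degeneracy hypothesis of Theorem~4.1) is what licenses the inversion needed to solve the Koszul identity for $\nabla_X Y$. One should also note explicitly that the metric, and hence $\nabla$, is independent of the choice of chart used in the reconstruction, which follows from the coordinate-invariance already established in Theorem~4.1. I expect the proof to be short: state the Koszul formula, observe that its right side depends only on $(g,\text{smooth structure})$, invoke non-degeneracy to solve for $\nabla$, and cite the classical uniqueness theorem.
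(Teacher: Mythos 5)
Your proposal is correct and follows essentially the same route as the paper: cite Theorem~4.1 for the metric, invoke the classical existence and uniqueness of the torsion-free metric-compatible connection, and exhibit the Christoffel symbols via the Koszul formula in terms of the known $g_{ij}=(\Gamma(x^i,x^j))^{-1}$. The paper additionally records an equivalent Koszul identity written purely in iterated $\Gamma$'s, but this is a cosmetic restatement rather than a different argument.
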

\begin{proof}
By Theorem~4.1 the diffusion data determine a smooth Riemannian metric $g$ on $M$, characterized by
\begin{equation}\label{eq:Gamma-metric-4p5}
\Gamma(f,h)=\langle df,dh\rangle_{g^{-1}}=\langle \nabla f,\nabla h\rangle_g,
\qquad f,h\in C^\infty(M),
\end{equation}
where $\nabla f$ is the $g$-gradient. It is classical that there exists a unique affine connection
$\nabla$ on $TM$ such that
\begin{equation}\label{eq:LC-char-4p5}
\nabla g =0,
\qquad
T^\nabla(X,Y):=\nabla_XY-\nabla_YX-[X,Y]=0,
\end{equation}
namely the Levi--Civita connection of $g$. Hence it suffices to show that the diffusion data determine
the coefficients of this connection.

Let $(U;x^1,\dots,x^n)$ be a coordinate chart. Since $g$ is known, its coefficients
\[
g_{ij}:=g(\partial_i,\partial_j)\in C^\infty(U),
\qquad
(g^{ij})=(g_{ij})^{-1},
\]
are known, and so are all partial derivatives $\partial_k g_{ij}$.
In particular, in coordinates the Levi--Civita Christoffel symbols are given by the Koszul formula
\begin{equation}\label{eq:Koszul-Christoffel-4p5}
\Gamma^k_{ij}
=\frac12\,g^{k\ell}\Bigl(\partial_i g_{j\ell}+\partial_j g_{i\ell}-\partial_\ell g_{ij}\Bigr),
\qquad 1\le i,j,k\le n.
\end{equation}
Thus the connection coefficients $(\Gamma^k_{ij})$ are uniquely determined on $U$, and by coordinate
transformation the resulting connection patches to a global affine connection on $TM$. By uniqueness
in \eqref{eq:LC-char-4p5}, this patched connection is the Levi--Civita connection of $g$.

Equivalently (and in a form expressed purely in $\Gamma$), for any $f,g,h\in C^\infty(M)$ one has the
intrinsic Koszul identity
\begin{equation}\label{eq:Koszul-Gamma-4p5}
\bigl\langle \nabla_{\nabla f}\nabla g,\nabla h\bigr\rangle_g
=\frac12\Bigl(\Gamma\!\bigl(f,\Gamma(g,h)\bigr)+\Gamma\!\bigl(g,\Gamma(f,h)\bigr)-\Gamma\!\bigl(h,\Gamma(f,g)\bigr)\Bigr),
\end{equation}
whose right-hand side is determined by the diffusion calculus (bilinearity, Leibniz, and the chain rule
of Lemma~2.3). In coordinates, taking $f=x^i$, $g=x^j$, $h=x^m$ gives
\[
\bigl\langle \nabla_{\nabla x^i}\nabla x^j,\nabla x^m\bigr\rangle_g
=\frac12\Bigl(\Gamma\!\bigl(x^i,\Gamma(x^j,x^m)\bigr)+\Gamma\!\bigl(x^j,\Gamma(x^i,x^m)\bigr)
-\Gamma\!\bigl(x^m,\Gamma(x^i,x^j)\bigr)\Bigr),
\]
and since $\nabla x^i=g^{ip}\partial_p$ and $\nabla_{\partial_p}\partial_q=\Gamma^k_{pq}\partial_k$, one computes
\[
\nabla_{\nabla x^i}\nabla x^j
=\nabla_{g^{ip}\partial_p}\bigl(g^{jq}\partial_q\bigr)
=g^{ip}(\partial_p g^{jq})\partial_q+g^{ip}g^{jq}\Gamma^k_{pq}\partial_k,
\]
so pairing with $\nabla x^m=g^{mr}\partial_r$ and using $g(\partial_q,\partial_r)=g_{qr}$ yields an explicit linear system
recovering $\Gamma^k_{pq}$ from \eqref{eq:Koszul-Gamma-4p5}. This is equivalent to \eqref{eq:Koszul-Christoffel-4p5}.

Therefore the diffusion semigroup uniquely determines the Levi--Civita connection of the recovered metric $g$.
\end{proof}

\begin{remark}
No other affine connection is compatible with the diffusion structure.
Any torsion or metric incompatibility would violate the symmetry of the
Dirichlet form and hence the fundamental identities satisfied by $\Gamma$.
Thus the diffusion semigroup rigidly selects the Levi--Civita connection.
\end{remark}

\begin{theorem}[Curvature recovered from $\Gamma_2$]\label{thm:recover-curvature}
Under Assumptions 2.1–2.2 and the non-degeneracy condition of Theorem 4.1,
the iterated carré du champ $\Gamma_2$ uniquely determines the Bakry--Émery
Ricci tensor
\[
\mathrm{Ric}_\mu := \mathrm{Ric}_g + \nabla^2\log\rho
\]
of the recovered weighted Riemannian manifold $(M,g,\mu)$.
In the special case $\rho\equiv\mathrm{const}$, this reduces to the classical
Ricci tensor of $(M,g)$.
\end{theorem}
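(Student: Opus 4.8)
The plan is to reduce the theorem to the Bochner identity \eqref{eq:Bochner-BE} together with the observation that $\Gamma_2$ is itself an object extracted from the diffusion semigroup. Indeed, the generator identity $\Gamma_2(f)=\tfrac12\,L\Gamma(f)-\Gamma(f,Lf)$ builds $\Gamma_2$ from $L$ and $\Gamma$, both of which are determined by $(P_t)$; and by \eqref{eq:Bochner-BE} the value $\Gamma_2(f)(x)$ decomposes as $\|\nabla^2 f(x)\|_g^2+\mathrm{Ric}_\mu(\nabla f(x),\nabla f(x))$. Choosing $f$ so that the Hessian term vanishes at $x$ isolates $\mathrm{Ric}_\mu$, and polarization then recovers the full symmetric tensor from values of $\Gamma_2$.

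In detail, I would proceed as follows. First, rewrite the generator in terms of the metric $g$ from Theorem~4.1 and the reference density $\rho=d\mu/\mathrm{vol}_g$ recovered in Section~4.4: the symmetry axiom (D2), the smoothness axiom (D5), and the identity $\Gamma(x^i,x^j)=g^{ij}$ force $L=\Delta_g+\langle\nabla\log\rho,\nabla\,\cdot\,\rangle_g$, so that \eqref{eq:Bochner-BE} holds on the smooth core $\mathcal A$. Second, fix $x\in M$ and $v\in T_xM$, pass to $g$-normal coordinates at $x$ (so $g_{ij}(x)=\delta_{ij}$ and all Christoffel symbols vanish at $x$), and set $f=\chi\sum_i v^i x^i$ with $\chi\in C_c^\infty(M)$ equal to $1$ near $x$; then $\nabla f(x)=v$ and $\nabla^2 f(x)=0$, so evaluating \eqref{eq:Bochner-BE} at $x$ gives $\Gamma_2(f)(x)=\mathrm{Ric}_\mu(v,v)$, which is the recovery formula \eqref{eq:Ricmu-recovery}. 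Third, polarize, $\mathrm{Ric}_\mu(v,w)=\tfrac12\bigl(\mathrm{Ric}_\mu(v+w,v+w)-\mathrm{Ric}_\mu(v,v)-\mathrm{Ric}_\mu(w,w)\bigr)$, to obtain the full bilinear form. Fourth, for uniqueness, observe that $g$, $L$, $\Gamma$, and hence $\Gamma_2$, are all determined by $(P_t)$, and that by \eqref{eq:Bochner-BE} the number $\Gamma_2(f)(x)$ is the same for every admissible $f$; hence $\mathrm{Ric}_\mu$ is determined without ambiguity. When $\rho$ is constant, $\nabla^2\log\rho=0$ and the recovered tensor is the classical Ricci tensor $\mathrm{Ric}_g$.

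I expect the main obstacle to be the non-circular derivation of the weighted-Laplacian form of $L$, and hence of \eqref{eq:Bochner-BE}: this must rest only on $g$ (Theorem~4.1) and the symmetry of the Dirichlet form, i.e. on the measure recovery of Section~4.4, so that the logical chain $(P_t)\to g\to\mu\to\text{Bochner}\to\mathrm{Ric}_\mu$ never invokes curvature. A smaller technical point to dispatch carefully is that the test function $f$ is defined only in a chart and must be cut off; this is harmless because \eqref{eq:Bochner-BE} shows $\Gamma_2(f)(x)$ depends only on the $2$-jet of $f$ at $x$ --- the apparent third-order dependence in $\tfrac12\,L\Gamma(f)-\Gamma(f,Lf)$ cancels --- so replacing $\sum_i v^i x^i$ by $\chi\sum_i v^i x^i$ does not alter the value at $x$. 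With these points settled, the normal-coordinate expansion and the polarization identity are routine.
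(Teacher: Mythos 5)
Your proposal is correct and follows essentially the same route as the paper: both arguments rest on the weighted Bochner identity $\Gamma_2(f)=\|\nabla^2 f\|_g^2+\mathrm{Ric}_\mu(\nabla f,\nabla f)$ for $L=\Delta_g+\langle\nabla\log\rho,\nabla\cdot\rangle_g$, isolate $\mathrm{Ric}_\mu(v,v)$ by choosing $f$ with $\nabla f(x)=v$ and $\nabla^2 f(x)=0$ in normal coordinates, and then polarize. The only cosmetic difference is that the paper expands $\tfrac12 L\Gamma(f)-\Gamma(f,Lf)$ explicitly in normal coordinates before invoking the Bochner identity, whereas you invoke it directly; your remarks on non-circularity and on the cancellation of third-order terms match the paper's treatment.
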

\begin{proof}
Fix the recovered smooth metric $g$ from Theorem~4.1 and write $\mu=\rho\,\mathrm{vol}_g$
(with $\rho\in C^\infty(M)$, $\rho>0$). By Proposition~2.4 and the identification of $\Gamma$
with the recovered metric (Theorem~4.1),
\begin{equation}\label{eq:Gamma-is-grad}
\Gamma(f,h)=\langle \nabla f,\nabla h\rangle_g,\qquad \Gamma(f)=|\nabla f|_g^2 .
\end{equation}
Moreover (by the definition of the recovered weighted manifold and the symmetry of $(P_t)$ with
respect to $\mu$), the generator is the $\mu$-Laplacian
\begin{equation}\label{eq:L-is-weighted-Lap}
L=\Delta_\mu:=\Delta_g+\langle \nabla\log\rho,\nabla(\,\cdot\,)\rangle_g .
\end{equation}
By Proposition~4.3, for every $f\in C^\infty(M)$ with the standing domain stability,
\begin{equation}\label{eq:Gamma2-by-L}
\Gamma_2(f)=\frac12\,L\Gamma(f)-\Gamma(f,Lf).
\end{equation}

Fix $x\in M$ and choose $g$-normal coordinates $(x^1,\dots,x^n)$ centered at $x$, so that
\[
g_{ij}(x)=\delta_{ij},\qquad \partial_k g_{ij}(x)=0,\qquad \Gamma^k_{ij}(x)=0 .
\]
Write $\partial_i=\frac{\partial}{\partial x^i}$ and use the summation convention.
At the point $x$, using \eqref{eq:Gamma-is-grad},
\[
\Gamma(f)(x)=|\nabla f|_g^2(x)=\partial_i f\,\partial_i f.
\]
Using \eqref{eq:L-is-weighted-Lap}, write $Lf=\Delta_g f+\langle \nabla\log\rho,\nabla f\rangle_g$.
In these coordinates at $x$,
\begin{equation}\label{eq:L-local}
Lf
=\partial_i\partial_i f + (\partial_i\log\rho)\,\partial_i f
\qquad\text{at }x.
\end{equation}
We now expand the two terms in \eqref{eq:Gamma2-by-L} at $x$.

\smallskip
\emph{(i) The term $\frac12\,L\Gamma(f)$.}
From $\Gamma(f)=\partial_j f\,\partial_j f$ we obtain
\begin{align}
\partial_i \Gamma(f)
&=2\,(\partial_i\partial_j f)\,(\partial_j f),
\label{eq:dGamma}\\
\partial_i\partial_i \Gamma(f)
&=2\,(\partial_i\partial_i\partial_j f)\,(\partial_j f)
+2\,(\partial_i\partial_j f)(\partial_i\partial_j f).
\label{eq:ddGamma}
\end{align}
Hence, using \eqref{eq:L-local} on $\Gamma(f)$ and \eqref{eq:ddGamma}--\eqref{eq:dGamma},
\begin{align}
\frac12\,L\Gamma(f)
&=\frac12\Bigl(\partial_i\partial_i\Gamma(f) + (\partial_i\log\rho)\,\partial_i\Gamma(f)\Bigr)
\nonumber\\
&=(\partial_i\partial_i\partial_j f)\,(\partial_j f)
+(\partial_i\partial_j f)(\partial_i\partial_j f)
+(\partial_i\log\rho)\,(\partial_i\partial_j f)\,(\partial_j f)
\label{eq:halfLGamma}
\end{align}
at $x$.

\smallskip
\emph{(ii) The term $\Gamma(f,Lf)$.}
By \eqref{eq:Gamma-is-grad} and \eqref{eq:L-local},
\[
\Gamma(f,Lf)=\langle \nabla f,\nabla(Lf)\rangle_g
=\partial_j f\,\partial_j(Lf)
\qquad\text{at }x.
\]
Differentiate \eqref{eq:L-local}:
\begin{align}
\partial_j(Lf)
&=\partial_j\partial_i\partial_i f
+\partial_j\bigl((\partial_i\log\rho)\,\partial_i f\bigr)
\nonumber\\
&=\partial_j\partial_i\partial_i f
+(\partial_j\partial_i\log\rho)\,\partial_i f
+(\partial_i\log\rho)\,\partial_j\partial_i f .
\label{eq:dLf}
\end{align}
Thus
\begin{equation}\label{eq:Gamma-f-Lf}
\Gamma(f,Lf)
=(\partial_j f)(\partial_j\partial_i\partial_i f)
+(\partial_j f)(\partial_j\partial_i\log\rho)(\partial_i f)
+(\partial_j f)(\partial_i\log\rho)(\partial_j\partial_i f)
\end{equation}
at $x$.

\smallskip
Subtracting \eqref{eq:Gamma-f-Lf} from \eqref{eq:halfLGamma}, the third-order terms cancel
(because $\partial_i\partial_i\partial_j f=\partial_j\partial_i\partial_i f$ in coordinates),
and the drift-gradient terms cancel as well (by symmetry $\partial_i\partial_j f=\partial_j\partial_i f$), leaving
\begin{equation}\label{eq:Gamma2-pointwise-expanded}
\Gamma_2(f)
=(\partial_i\partial_j f)(\partial_i\partial_j f)
-(\partial_j f)(\partial_j\partial_i\log\rho)(\partial_i f)
\qquad\text{at }x.
\end{equation}

At $x$ in normal coordinates, $(\partial_i\partial_j f)$ are precisely the components of the
covariant Hessian $\nabla^2 f$ (since $\Gamma^k_{ij}(x)=0$), hence
\[
(\partial_i\partial_j f)(\partial_i\partial_j f)=\|\nabla^2 f\|_{HS,g}^2(x).
\]
Similarly, the symmetric matrix $(\partial_j\partial_i\log\rho)$ equals the covariant Hessian
$\nabla^2\log\rho$ at $x$, and $(\partial_i f)$ are the components of $\nabla f$ at $x$, so
\[
-(\partial_j f)(\partial_j\partial_i\log\rho)(\partial_i f)
=\nabla^2\log\rho(\nabla f,\nabla f)(x).
\]

Invoking the Bochner identity for the weighted Laplacian
(see \cite[Theorem~3.1.2]{BakryGentilLedoux2014}), one has pointwise
\[
\frac12\,L|\nabla f|_g^2-\langle\nabla f,\nabla Lf\rangle_g
=\|\nabla^2 f\|_{HS,g}^2+\Ric_\mu(\nabla f,\nabla f),
\]
where $\Ric_\mu=\Ric_g+\nabla^2\log\rho$.

Combining these gives the intrinsic
Bochner identity for $L=\Delta_\mu$:
\begin{equation}\label{eq:Bochner-weighted}
\Gamma_2(f)
=\|\nabla^2 f\|_{HS,g}^2+\bigl(\Ric_g+\nabla^2\log\rho\bigr)(\nabla f,\nabla f)
=\|\nabla^2 f\|_{HS,g}^2+\Ric_\mu(\nabla f,\nabla f),
\end{equation}
pointwise on $M$, where $\Ric_\mu:=\Ric_g+\nabla^2\log\rho$.

Define the polarized $\Gamma_2$ by
\[
\Gamma_2(f,g):=\frac14\Bigl(\Gamma_2(f+g)-\Gamma_2(f-g)\Bigr),
\qquad
\langle \nabla^2 f,\nabla^2 g\rangle_{HS,g}
:= g^{ia}g^{jb}(\nabla^2 f)_{ij}(\nabla^2 g)_{ab}.
\]
Polarizing \eqref{eq:Bochner-weighted} yields, for all $f,g\in C^\infty(M)$,
\begin{equation}\label{eq:Bochner-polarized}
\Gamma_2(f,g)
=\langle \nabla^2 f,\nabla^2 g\rangle_{HS,g}+\Ric_\mu(\nabla f,\nabla g).
\end{equation}
Fix $x\in M$. The term $\langle \nabla^2 f,\nabla^2 g\rangle_{HS,g}(x)$ is determined by $g$ (hence by
$\Gamma$) and the $2$-jets of $f,g$ at $x$, while $\Gamma_2(f,g)(x)$ is given by the diffusion data.
Now if $df_x=0$ (equivalently $\nabla f(x)=0$), then the rightmost term in \eqref{eq:Bochner-polarized} vanishes,
and the identity shows $\Gamma_2(f,g)(x)$ depends only on $\nabla^2 f(x),\nabla^2 g(x)$.
Consequently, for any $v,w\in T_xM$ choose $f,g$ with $\nabla f(x)=v$, $\nabla g(x)=w$ and
$\nabla^2 f(x)=\nabla^2 g(x)=0$ (possible in normal coordinates by taking affine-linear functions).
Then \eqref{eq:Bochner-polarized} yields
\[
\Ric_\mu(v,w)
=\Gamma_2(f,g)(x),
\]
which shows that $\Ric_\mu(x)$ is uniquely determined by $\Gamma_2$ (and bilinear symmetry follows by construction).
In particular, when $\rho\equiv\mathrm{const}$, $\nabla^2\log\rho\equiv 0$ and $\Ric_\mu=\Ric_g$.
\end{proof}

\subsection{Recovering the Levi--Civita connection}\label{subsec:recover-connection}

In this subsection we show that the diffusion semigroup uniquely determines the
Levi--Civita connection of the recovered Riemannian metric.
No additional analytic or asymptotic input is required.

\medskip
\noindent\textbf{Existence and uniqueness.}
Let $g$ be the Riemannian metric recovered in Section~4.1.
It is a classical result that there exists a unique affine connection $\nabla$
on $TM$ satisfying
\begin{equation}\label{eq:LC-conditions}
\nabla g = 0,
\qquad
T^\nabla = 0,
\end{equation}
where $T^\nabla(X,Y)=\nabla_XY-\nabla_YX-[X,Y]$ denotes the torsion tensor.
This connection is the Levi--Civita connection of $(M,g)$.

\medskip
\noindent\textbf{Explicit recovery in coordinates.}
Let $(x^1,\dots,x^n)$ be local coordinates.
Since the coefficients
\[
g_{ij} = \langle dx^i,dx^j\rangle_g
\]
are explicitly known from Section~4.1, their inverse $g^{ij}$ and all first
derivatives $\partial_k g_{ij}$ are also known.
The Christoffel symbols of the Levi--Civita connection are therefore determined
by the classical Koszul formula,
\begin{equation}\label{eq:Christoffel}
\Gamma^k_{ij}
=
\frac12\,g^{k\ell}
\big(
\partial_i g_{j\ell}
+
\partial_j g_{i\ell}
-
\partial_\ell g_{ij}
\big).
\end{equation}
Thus the Levi--Civita connection is completely determined by the diffusion data
through the recovered metric.

\medskip
\noindent\textbf{Intrinsic Koszul identity via $\Gamma$.}
For all $f,g,h\in C^\infty(M)$ one has the pointwise identity
\begin{equation}\label{eq:Koszul-Gamma}
\big\langle \nabla_{\nabla f}\nabla g,\nabla h\big\rangle_g
=
\frac12\Big(
\Gamma\big(f,\Gamma(g,h)\big)
+
\Gamma\big(g,\Gamma(f,h)\big)
-
\Gamma\big(h,\Gamma(f,g)\big)
\Big),
\end{equation}
where $\langle\cdot,\cdot\rangle_g$ denotes the Riemannian inner product induced by
the recovered metric.

\begin{theorem}[Reference measure recovered]\label{thm:recover-measure}
Under Assumptions~2.1--2.2 and the non-degeneracy condition of Theorem~4.1,
the reference measure $\mu$ is uniquely determined (up to normalization) by the
diffusion semigroup.
More precisely, there exists a unique smooth positive function $\rho$ such that
\[
d\mu = \rho\,d\mathrm{vol}_g,
\]
and the generator admits the representation
\[
L = \Delta_g + \langle\nabla\log\rho,\nabla\cdot\rangle_g.
\]
\end{theorem}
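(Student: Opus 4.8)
The plan is to combine the metric $g$ recovered in Theorem~4.1 with the integration-by-parts structure of the Dirichlet form $\mathcal{E}$ in order to pin down both the density $\rho$ and the precise form of the generator. Existence and uniqueness of $\rho$ cost almost nothing: Theorem~4.1 already determines $g$, hence the Riemannian volume $\mathrm{vol}_g$; by axiom (D5) the measure $\mu$ is locally $\rho_0\,dx$ with $\rho_0\in C^\infty$, $\rho_0>0$, while in the same chart $\mathrm{vol}_g=\sqrt{\det(g_{ij})}\,dx$ with smooth strictly positive density. Since densities with respect to Lebesgue measure transform identically under change of chart, the ratio $\rho:=\rho_0/\sqrt{\det(g_{ij})}$ is a globally well-defined, smooth, strictly positive function, $d\mu=\rho\,d\mathrm{vol}_g$, and it is visibly the unique such density. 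Replacing $\mu$ by $c\mu$ with $c>0$ changes none of $(P_t)$, $\mathcal{E}$, or $\Gamma$ (the underlying function spaces and maps are unchanged), so the diffusion can determine $\mu$ only up to this overall scale --- precisely the asserted ``up to normalization''.

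The analytic heart of the argument is the carr\'e du champ integration-by-parts identity
\[
\mathcal{E}(f,h)=\int_M \Gamma(f,h)\,d\mu=\int_M \langle\nabla f,\nabla h\rangle_g\,d\mu,
\]
valid for $f,h$ in the smooth core $\mathcal{A}$ of Assumption~2.2; the second equality is Theorem~4.1. For the first I would integrate the generator formula of Proposition~2.4, $\Gamma(f,h)=\tfrac12\bigl(L(fh)-f\,Lh-h\,Lf\bigr)$, against $\mu$: the term $\int_M L(fh)\,d\mu$ vanishes because $\mu$ is invariant and $(P_t)$ conservative (so $t\mapsto\int_M P_t u\,d\mu$ is constant and $\int_M Lu\,d\mu=0$), while $-\int_M f\,Lh\,d\mu=-\int_M h\,Lf\,d\mu=\mathcal{E}(f,h)$ by symmetry (D2). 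Combining this with $\mathcal{E}(f,h)=-\langle Lf,h\rangle_{L^2(\mu)}$ yields the weak identity $\int_M (Lf)\,h\,d\mu=-\int_M \langle\nabla f,\nabla h\rangle_g\,d\mu$ for $h$ in (a dense subclass of) $\mathcal{A}$.

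To identify the generator I would introduce the weighted Laplacian $\Delta_\mu:=\Delta_g+\langle\nabla\log\rho,\nabla\cdot\rangle_g=\mathrm{div}_\mu\circ\nabla$, where $\mathrm{div}_\mu Y:=\rho^{-1}\,\mathrm{div}_g(\rho Y)=\mathrm{div}_g Y+\langle\nabla\log\rho,Y\rangle_g$. By the divergence theorem for the weighted volume $\mu=\rho\,\mathrm{vol}_g$ this operator satisfies $\int_M(\Delta_\mu f)\,h\,d\mu=-\int_M\langle\nabla f,\nabla h\rangle_g\,d\mu$ for every $h\in C_c^\infty(M)$ --- exactly the relation found above for $L$. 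Hence $\int_M(Lf-\Delta_\mu f)\,h\,d\mu=0$ for $h$ ranging over a class dense in $L^2(\mu)$; since $\mu$ has full support and both $Lf$ (using $L\mathcal{A}\subset\mathcal{A}\subset C^\infty(M)$) and $\Delta_\mu f$ are continuous, $Lf=\Delta_\mu f$ everywhere on $M$ for all $f\in\mathcal{A}$, and then $L=\Delta_\mu$ by density of $\mathcal{A}$ and closedness of $L$. Uniqueness up to normalization follows immediately: if $\mu'=\rho'\,\mathrm{vol}_g$ is another smooth positive measure making $(P_t)$ symmetric, the same steps give $L=\Delta_{\mu'}$, so $\langle\nabla\log(\rho/\rho'),\nabla f\rangle_g\equiv 0$ for all $f\in C^\infty(M)$; by the non-degeneracy of Theorem~4.1 this forces $\nabla\log(\rho/\rho')\equiv 0$, hence $\rho/\rho'$ is constant on the connected manifold $M$ and $\mu'=c\,\mu$.

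The step I expect to be the main obstacle is the carr\'e du champ identity $\mathcal{E}(f,h)=\int_M\Gamma(f,h)\,d\mu$ in full rigor. On a non-compact $M$ with possibly infinite $\mu$ one cannot simply pair with the constant $1\notin L^2(\mu)$, so $\int_M Lu\,d\mu=0$ and the attendant integrability of $\Gamma(f,h)$ and $L(fh)$ must be extracted directly from conservativity and invariance of $(P_t)$ (e.g.\ by truncation, or by first restricting to compactly supported elements of $\mathcal{A}$ and then invoking density), and one must check --- this is where regularity axiom (D4) enters --- that the test class is rich enough both to run the weak formulation and to upgrade the $\mu$-a.e.\ identity $Lf=\Delta_\mu f$ to an everywhere identity. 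The remaining ingredients --- the divergence theorem for a weighted volume, and the elementary fact that a smooth function on a connected manifold with identically vanishing gradient is constant --- are classical Riemannian calculus.
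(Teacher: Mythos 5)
Your proposal is correct and reaches the same conclusion, but it is organized differently from the paper's proof. The paper first forms $B:=L-\Delta_g$ and shows \emph{pointwise, algebraically} that $B$ is a derivation (by comparing the Leibniz defects of $L$ and $\Delta_g$, both of which equal $2\Gamma(f,h)=2\langle\nabla f,\nabla h\rangle_g$), so that $L=\Delta_g+\langle Z,\nabla\cdot\rangle_g$ for some vector field $Z$ \emph{before} any measure-theoretic input; only then does it invoke symmetry and weighted integration by parts to force $Z=\nabla\log\rho$. You skip the derivation step entirely: you produce $\rho$ immediately from (D5) and the smoothness of $\mathrm{vol}_g$, define the candidate operator $\Delta_\mu=\mathrm{div}_\mu\circ\nabla$, and identify $L=\Delta_\mu$ by showing both satisfy the same weak identity against a dense test class, upgrading to a pointwise identity via continuity and full support. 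Your route is more direct and makes the role of (D4) and of the identity $\int_M\Gamma(f,h)\,d\mu=-\int_M f\,Lh\,d\mu$ explicit (the paper simply asserts the latter as ``equivalent by definition of the Dirichlet form,'' whereas you derive it by integrating Proposition~2.4 and using invariance --- and you correctly flag the integrability care needed on non-compact $M$, a point the paper glosses over as well). What the paper's derivation argument buys is a clean conceptual separation: the first-order drift $Z$ is extracted from the diffusion calculus alone, and symmetry is used \emph{only} to show that this drift is the gradient of a log-density; your version entangles these two steps but loses nothing in rigor. Both uniqueness arguments are identical.
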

\begin{proof} All identities below are understood for $f,h\in\mathcal A$, the smooth core from Assumption~2.2.
Let $g$ be the recovered metric from Theorem~4.1, so that for all $f,h\in C_c^\infty(M)$,
\begin{equation}\label{eq:4p7-Gamma-metric}
\Gamma(f,h)=\langle \nabla f,\nabla h\rangle_g .
\end{equation}
Let $\Delta_g$ denote the Laplace--Beltrami operator of $g$ (defined intrinsically from $g$).
Define the operator
\[
B := L-\Delta_g .
\]

\smallskip
\noindent\emph{Claim 1: $B$ is a derivation, hence $B=\langle Z,\nabla(\,\cdot\,)\rangle_g$ for a unique smooth vector field $Z$.}
Using that $L$ has carré du champ $\Gamma$ (Proposition~2.4) and that $\Delta_g$ has carré du champ
$\langle \nabla(\cdot),\nabla(\cdot)\rangle_g$, one has for all $f,h\in C^\infty(M)$
\begin{align*}
L(fh) &= f\,Lh + h\,Lf + 2\,\Gamma(f,h),\\
\Delta_g(fh) &= f\,\Delta_g h + h\,\Delta_g f + 2\,\langle \nabla f,\nabla h\rangle_g .
\end{align*}
Subtracting and using \eqref{eq:4p7-Gamma-metric} gives
\begin{equation}\label{eq:4p7-derivation}
B(fh)=f\,B(h)+h\,B(f),
\qquad
B(1)=0.
\end{equation}
Thus $B$ is a smooth first-order derivation on $C^\infty(M)$, hence there exists a unique smooth
vector field $Z$ such that
\begin{equation}\label{eq:4p7-Z-def}
B(h)=Z(h)=\langle Z,\nabla h\rangle_g,\qquad h\in C^\infty(M).
\end{equation}
(For uniqueness at $x$, note that the linear functional $dh(x)\mapsto B(h)(x)$ is determined by \eqref{eq:4p7-derivation}
and by choosing $h$ with prescribed $dh(x)$; Riesz with $g$ yields $Z(x)$.)

Consequently
\begin{equation}\label{eq:4p7-L-split}
L=\Delta_g+\langle Z,\nabla(\,\cdot\,)\rangle_g .
\end{equation}

\smallskip
\noindent\emph{Claim 2: if $\mu$ is a smooth positive measure with respect to which $L$ is symmetric, then
$\mu=\rho\,\mathrm{vol}_g$ with $\rho$ satisfying $Z=\nabla\log\rho$.}
Since both $\mu$ and $\mathrm{vol}_g$ are smooth positive measures, there exists $\rho\in C^\infty(M)$, $\rho>0$, such that
\begin{equation}\label{eq:4p7-density}
d\mu=\rho\,d\mathrm{vol}_g .
\end{equation}
Symmetry of $L$ with respect to $\mu$ is equivalent (by definition of the Dirichlet form) to
\begin{equation}\label{eq:4p7-symmetry}
\int_M \Gamma(f,h)\,d\mu = -\int_M f\,Lh\,d\mu,
\qquad f,h\in C_c^\infty(M).
\end{equation}
Using \eqref{eq:4p7-Gamma-metric}, \eqref{eq:4p7-L-split}, and \eqref{eq:4p7-density}, \eqref{eq:4p7-symmetry} becomes
\begin{equation}\label{eq:4p7-weak}
\int_M \langle \nabla f,\nabla h\rangle_g\,\rho\,d\mathrm{vol}_g
=
-\int_M f\Bigl(\Delta_g h+\langle Z,\nabla h\rangle_g\Bigr)\rho\,d\mathrm{vol}_g .
\end{equation}
On the other hand, for the Riemannian volume measure and any smooth weight $\rho$ one has the weighted integration-by-parts identity
\begin{equation}\label{eq:4p7-ibp}
\int_M \langle \nabla f,\nabla h\rangle_g\,\rho\,d\mathrm{vol}_g
=
-\int_M f\,\mathrm{div}_g(\rho\,\nabla h)\,d\mathrm{vol}_g ,
\qquad f,h\in C_c^\infty(M),
\end{equation}
and the product rule for divergence gives pointwise
\begin{equation}\label{eq:4p7-div-expand}
\mathrm{div}_g(\rho\,\nabla h)=\rho\,\Delta_g h+\langle \nabla\rho,\nabla h\rangle_g .
\end{equation}
Comparing \eqref{eq:4p7-weak} with \eqref{eq:4p7-ibp}--\eqref{eq:4p7-div-expand} yields, for all $h\in C_c^\infty(M)$,
\[
\int_M f\Bigl(\langle \nabla\rho,\nabla h\rangle_g-\rho\,\langle Z,\nabla h\rangle_g\Bigr)\,d\mathrm{vol}_g=0
\qquad\forall f\in C_c^\infty(M),
\]
hence
\begin{equation}\label{eq:4p7-grad-rho}
\langle \nabla\rho,\nabla h\rangle_g=\rho\,\langle Z,\nabla h\rangle_g
\qquad\text{pointwise on }M,\ \forall h\in C^\infty(M).
\end{equation}
Since $\{\nabla h(x):h\in C^\infty(M)\}=T_xM$ for every $x$, \eqref{eq:4p7-grad-rho} implies
\[
\nabla\rho=\rho\,Z,
\qquad\text{equivalently}\qquad
Z=\nabla(\log\rho).
\]
Substituting $Z=\nabla\log\rho$ into \eqref{eq:4p7-L-split} gives the claimed representation
\[
L=\Delta_g+\langle \nabla\log\rho,\nabla(\,\cdot\,)\rangle_g .
\]

\smallskip
\noindent\emph{Uniqueness up to normalization.}
If $\rho_1,\rho_2$ are smooth positive functions with
\[
\Delta_g+\langle \nabla\log\rho_1,\nabla\cdot\rangle_g
=
\Delta_g+\langle \nabla\log\rho_2,\nabla\cdot\rangle_g,
\]
then $\langle \nabla\log(\rho_1/\rho_2),\nabla h\rangle_g=0$ for all $h$, hence $\nabla\log(\rho_1/\rho_2)=0$ and
$\rho_1/\rho_2$ is constant on each connected component. Thus $\mu=\rho\,\mathrm{vol}_g$ is determined uniquely up to a global multiplicative constant.
\end{proof}

\begin{remark}
The recovered structure $(M,g,\mu)$ is a weighted Riemannian manifold.
Only in the special case $\rho\equiv\mathrm{const}$ does the diffusion generator
coincide with the Laplace--Beltrami operator $\Delta_g$.
\end{remark}

\subsection{Recovery of the reference measure}\label{subsec:recover-measure}

In this subsection we show that the reference measure $\mu$ with respect to which the
diffusion semigroup $(P_t)$ is symmetric is uniquely determined by the diffusion structure
and the recovered Riemannian metric.

\medskip
\noindent\textbf{Symmetry and divergence form.}
Let $g$ be the Riemannian metric recovered in Section~4.1, and let $\nabla$ denote its
Levi--Civita connection (Section~4.3).
By definition of the Dirichlet form,
for all $f,h\in C_c^\infty(M)$ one has
\begin{equation}\label{eq:ibp}
\mathcal E(f,h)
=
-\int_M f\,Lh\,d\mu
=
\int_M \Gamma(f,h)\,d\mu
=
\int_M \langle\nabla f,\nabla h\rangle_g\,d\mu.
\end{equation}
Identity \eqref{eq:ibp} expresses the symmetry of $L$ with respect to $\mu$ in geometric form.

\medskip
\noindent\textbf{Identification of the density.}
Let $\mathrm{vol}_g$ denote the Riemannian volume measure associated with $g$.
Since both $\mu$ and $\mathrm{vol}_g$ are smooth, positive measures on the same smooth
manifold, there exists a smooth positive function $\rho$ such that
\[
d\mu = \rho\,d\mathrm{vol}_g.
\]
Write $\psi:=\log\rho$.

\medskip
\noindent\textbf{Expression of the generator.}
Using \eqref{eq:ibp} and the definition of divergence with respect to $\mu$, we may
rewrite the generator $L$ in weak form as
\[
\int_M \langle\nabla f,\nabla h\rangle_g\,d\mu
=
-\int_M f\,Lh\,d\mu
=
\int_M f\,\mathrm{div}_\mu(\nabla h)\,d\mu,
\]
which shows that
\begin{equation}\label{eq:divmu}
Lh = \mathrm{div}_\mu(\nabla h).
\end{equation}
Since $\mathrm{div}_\mu(X)=\mathrm{div}_g(X)+\langle\nabla\psi,X\rangle_g$ for any vector
field $X$, identity \eqref{eq:divmu} yields
\begin{equation}\label{eq:L-drift}
Lh = \Delta_g h + \langle\nabla\psi,\nabla h\rangle_g,
\end{equation}
where $\Delta_g$ is the Laplace--Beltrami operator of $g$.

\medskip
\noindent\textbf{Recovery of $\rho$.}
Equation \eqref{eq:L-drift} shows that the drift vector field
\[
Z := \nabla\psi
\]
is completely determined by the known operators $L$ and $\Delta_g$.
Indeed, for any $h\in C^\infty(M)$,
\[
\langle Z,\nabla h\rangle_g = Lh - \Delta_g h.
\]
Evaluating this identity on a local basis of coordinate functions determines $Z$
pointwise.
Since $Z$ is a gradient field, $\psi$ is recovered (up to an additive constant) by
integration, and hence $\rho=e^\psi$ is recovered up to a positive multiplicative constant.
\subsection{Global geometric determination up to isometry}
\label{subsec:global-isometry}

We now show that the diffusion semigroup determines not only the local geometric data
$(g,\nabla,R,\mu)$, but the global Riemannian manifold uniquely up to isometry.

\medskip
\noindent\textbf{Reconstructed local tensors.}
By Sections~4.1--4.4, the diffusion semigroup $(P_t)$ determines:
\begin{itemize}
\item a smooth Riemannian metric $g$,
\item its Levi--Civita connection $\nabla$,
\item the Riemann curvature tensor $R$,
\item and the reference measure $\mu=\rho\,\mathrm{vol}_g$,
\end{itemize}
all intrinsically and independently of any coordinate choices.

These objects are genuine tensorial quantities on $M$.

\medskip
\noindent\textbf{Compatibility on chart overlaps.}
Let $(U,\varphi)$ and $(V,\psi)$ be overlapping coordinate charts used in the reconstruction.
On each chart the metric, connection, and curvature tensors are recovered from the diffusion
semigroup by the same intrinsic procedure.
Since the reconstruction depends only on $(P_t)$ and not on the choice of chart,
the resulting tensor fields agree on the overlap $U\cap V$.

In particular, if $g^{(U)}$ and $g^{(V)}$ denote the coordinate expressions of the metric
on $U$ and $V$, then on $U\cap V$ one has
\[
g^{(U)} = (\psi\circ\varphi^{-1})^* g^{(V)}.
\]
Thus the transition map
\[
T := \psi\circ\varphi^{-1} : \varphi(U\cap V)\longrightarrow\psi(U\cap V)
\]
is an isometry between the reconstructed metrics.

\medskip
\noindent\textbf{Preservation of connection and curvature.}
Because the Levi--Civita connection $\nabla$ and curvature tensor $R$ are determined uniquely
by $g$ (Sections~4.2--4.3), the same argument shows that $T$ preserves these structures as well:
\[
DT\big(\nabla_X Y\big) = \nabla_{DT(X)} DT(Y),
\qquad
DT\circ R = R\circ DT,
\]
for all vector fields $X,Y$ on $U\cap V$.
Hence $T$ is a local Riemannian isometry.

\medskip
\noindent\textbf{Gluing and rigidity.}
Since all transition maps between overlapping charts are smooth local isometries,
the reconstructed charts glue together to define a unique global Riemannian structure on $M$.
Any alternative reconstruction from the same diffusion semigroup would yield the same local
tensor fields and hence the same isometric gluing.

\begin{corollary}[Determination up to isometry]\label{cor:4.9}
The diffusion semigroup $(P_t)$ determines the weighted Riemannian manifold $(M,g,\mu)$ uniquely
up to a global Riemannian isometry, and determines $\mu$ up to a global multiplicative constant on each
connected component.
\end{corollary}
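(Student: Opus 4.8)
The plan is to show that the whole reconstruction of Sections~4.1--4.4 is \emph{natural}: it commutes with any isomorphism of the diffusion data. Since the reconstructed metric, connection, curvature, and measure are genuine global tensorial (resp.\ measure-theoretic) objects, naturality at the level of the defining operators $\Gamma$, $\Gamma_2$, $L$ will force naturality of the outputs, and the corollary follows with no separate gluing argument.

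First I would fix the hypothesis. Call two triples $(M_i,(P^i_t),\mu_i)$, $i=1,2$, satisfying Assumptions~2.1--2.2 and the non-degeneracy of Theorem~\ref{thm:recover-metric-from-Gamma} \emph{diffusion-isomorphic} if there is a homeomorphism $\Phi\colon M_1\to M_2$ such that $\Phi^*\colon f\mapsto f\circ\Phi$ intertwines the symmetric Markov semigroups, $\Phi^*\circ P^2_t=P^1_t\circ\Phi^*$; intertwining of symmetric semigroups in particular forces $\Phi_*\mu_1$ and $\mu_2$ to be proportional on each connected component, and we normalise so that $\Phi_*\mu_1=\mu_2$. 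By the companion paper~\cite{SanghaEntropySmooth}, the smooth structure of each $M_i$ is itself recovered from $(P^i_t)$, so $\Phi$ is a diffeomorphism for these recovered smooth structures and $\Phi^*$ carries $C^\infty(M_2)$ onto $C^\infty(M_1)$. Intertwining of the semigroups passes to the generators, and then, since $\Phi^*$ is a unital algebra homomorphism that commutes with the pointwise limit in \eqref{eq:Gamma-semigroup} (and with the generator identity defining $\Gamma_2$), to the first- and second-order calculus:
\[
\Gamma^{(1)}(\Phi^*f,\Phi^*h)=\Phi^*\!\bigl(\Gamma^{(2)}(f,h)\bigr),
\qquad
\Gamma^{(1)}_2(\Phi^*f)=\Phi^*\!\bigl(\Gamma^{(2)}_2(f)\bigr),
\qquad f,h\in C^\infty(M_2).
\]

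Next I would feed this into the four reconstruction theorems. Theorem~\ref{thm:recover-metric-from-Gamma} pins down the metric pointwise in any chart by $g^{ij}=\Gamma(x^i,x^j)$; since $\Phi$ sends charts to charts, the identity for $\Gamma$ above gives $\Phi^*g_2=g_1$, so $\Phi$ is a Riemannian isometry $(M_1,g_1)\to(M_2,g_2)$. The Levi--Civita connection (Theorem~\ref{thm:recover-connection}), the Riemann tensor, and the Bakry--\'Emery Ricci tensor (Theorem~\ref{thm:recover-curvature}) are then automatically preserved by $\Phi$: this is classical for an isometry, and is also transparent from the intrinsic Koszul identity \eqref{eq:Koszul-Gamma} and the Bochner identity \eqref{eq:Bochner-weighted}, whose right-hand sides involve only $\Gamma$, $\Gamma_2$, and $g$. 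For the measure, Theorem~\ref{thm:recover-measure} writes $\mu_i=\rho_i\,\mathrm{vol}_{g_i}$ with $\nabla^{g_i}\log\rho_i$ equal to the drift field $L_i-\Delta_{g_i}$ (read off through $g_i$); since $\Phi$ is an isometry it intertwines $\Delta_{g_1}$ with $\Delta_{g_2}$, and by hypothesis it intertwines $L_1$ with $L_2$, so $\Phi^*$ carries the drift of the second system to that of the first, hence $\nabla^{g_1}(\Phi^*\log\rho_2-\log\rho_1)=0$ and $\Phi^*\log\rho_2-\log\rho_1$ is locally constant. Combined with $\Phi^*\mathrm{vol}_{g_2}=\mathrm{vol}_{g_1}$, this gives $\Phi_*\mu_1=\mu_2$ up to a constant on each component --- precisely the uniqueness clause of Theorem~\ref{thm:recover-measure}.

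Finally, globalization is automatic: each of $g$, $\nabla$, $R$ is an honest tensor field on $M_i$ and $\mu_i$ an honest measure, produced by a chart-independent recipe, so the pointwise/local identities above are the global ones. The single-manifold form of the statement is the special case $M_1=M_2=M$, $(P^1_t)=(P^2_t)=(P_t)$, where the reconstruction simply returns the same $(g,\nabla,R,\mu)$ every time. I expect the only genuine difficulty to be conceptual rather than computational: isolating the correct category of ``diffusion data'' and, within it, checking that the smooth-structure recovery of~\cite{SanghaEntropySmooth} is compatible with the metric recovery here --- that is, that a semigroup intertwiner is a diffeomorphism \emph{before} one can even ask whether it is an isometry. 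Once that naturality is secured, the isometry statement is formal.
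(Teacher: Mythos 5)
Your proposal is correct in substance, but it proves a more general statement than the paper does here, and the paper's own proof of Corollary~\ref{cor:4.9} is the degenerate case of your argument. The paper works on the single fixed manifold $M$ and compares the canonical reconstruction $(g,\mu)$ with an arbitrary competing weighted structure $(\widetilde g,\widetilde\mu)$ realizing the same $\Gamma$ and symmetrizing the same $L$: the pointwise identity $\widetilde g^{-1}(dx^i,dx^j)=\Gamma(x^i,x^j)=g^{-1}(dx^i,dx^j)$ forces $\widetilde g=g$, and the drift comparison forces $\nabla\log(\widetilde\rho/\rho)=0$, so the identity map is the isometry. Your route --- setting up a category of diffusion-isomorphic triples, showing $\Phi^*$ intertwines $L$, $\Gamma$, $\Gamma_2$, and deducing $\Phi^*g_2=g_1$, $\Phi^*\nabla_2=\nabla_1$, etc.\ --- is really the content of the paper's Corollary~\ref{cor:4.11} (information-equivalence implies isometry); specializing to $M_1=M_2=M$, $\Phi=\mathrm{id}$ collapses it to the paper's argument. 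What your approach buys is a single naturality statement covering both corollaries; what it costs is two extra ingredients you assert rather than fully establish: (i) that semigroup intertwining alone forces $\Phi_*\mu_1\propto\mu_2$ (the paper takes $\Phi_*\mu=\mu'$ as a hypothesis in its equivalence definition; your later drift argument does supply the proportionality, but the setup as written is slightly circular), and (ii) that a semigroup intertwiner is automatically a diffeomorphism, for which you must lean on the companion paper \cite{SanghaEntropySmooth} --- an input the paper's proof of Corollary~\ref{cor:4.9} never needs, since it never leaves the identity map. For the corollary as stated, your argument is sound; just be aware that the heavy machinery is only doing work for the two-manifold version.
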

\begin{proof}
Let $(M,g,\mu)$ be the reconstruction obtained from $(P_t)$ via Theorems~4.1 and~4.7, so that
for all $f,h\in C^\infty(M)$,
\begin{equation}\label{eq:c49-Gamma-metric}
\Gamma(f,h)=\langle df,dh\rangle_{g^{-1}}=\langle \nabla^g f,\nabla^g h\rangle_g,
\end{equation}
and
\begin{equation}\label{eq:c49-L-weighted}
L=\Delta_g+\langle \nabla^g\log\rho,\nabla^g(\,\cdot\,)\rangle_g,
\qquad
\mu=\rho\,\mathrm{vol}_g,
\end{equation}
for some $\rho\in C^\infty(M)$, $\rho>0$.

Suppose $(\widetilde g,\widetilde\mu)$ is any other weighted Riemannian structure on $M$ for which
the same diffusion calculus is realized; equivalently, the associated carré du champ agrees with
$\Gamma$:
\begin{equation}\label{eq:c49-Gamma-tilde}
\Gamma(f,h)=\langle df,dh\rangle_{\widetilde g^{-1}}
=\langle \nabla^{\widetilde g} f,\nabla^{\widetilde g} h\rangle_{\widetilde g},
\qquad \forall f,h\in C^\infty(M).
\end{equation}
Fix $x\in M$ and $\alpha,\beta\in T_x^*M$. Choose $f,h\in C^\infty(M)$ with $df(x)=\alpha$ and
$dh(x)=\beta$ (e.g.\ in a chart, take $f=\chi\sum_i\alpha_i x^i$ with $\chi\equiv 1$ near $x$).
Evaluating \eqref{eq:c49-Gamma-metric} and \eqref{eq:c49-Gamma-tilde} at $x$ gives
\[
\langle \alpha,\beta\rangle_{g^{-1}_x}
=\Gamma(f,h)(x)
=\langle \alpha,\beta\rangle_{\widetilde g^{-1}_x}.
\]
Since $\alpha,\beta$ were arbitrary, $g^{-1}_x=\widetilde g^{-1}_x$ for all $x$, hence
\begin{equation}\label{eq:c49-metrics-equal}
\widetilde g=g.
\end{equation}

Next, write $\widetilde\mu=\widetilde\rho\,\mathrm{vol}_{\widetilde g}
=\widetilde\rho\,\mathrm{vol}_g$ with $\widetilde\rho>0$ smooth. Symmetry of $L$ with respect to
$\widetilde\mu$ means
\begin{equation}\label{eq:c49-sym}
\int_M \Gamma(f,h)\,d\widetilde\mu
=-\int_M f\,Lh\,d\widetilde\mu,
\qquad \forall f,h\in C_c^\infty(M).
\end{equation}
Using \eqref{eq:c49-metrics-equal} and $\Gamma(f,h)=\langle \nabla^g f,\nabla^g h\rangle_g$,
the integration-by-parts identity with weight $\widetilde\rho$ yields
\[
\int_M \langle \nabla^g f,\nabla^g h\rangle_g\,\widetilde\rho\,d\mathrm{vol}_g
=-\int_M f\,\Bigl(\Delta_g h+\bigl\langle \nabla^g\log\widetilde\rho,\nabla^g h\bigr\rangle_g\Bigr)\,
\widetilde\rho\,d\mathrm{vol}_g.
\]
Comparing with \eqref{eq:c49-sym} for all $f$ forces the pointwise identity
\[
Lh=\Delta_g h+\bigl\langle \nabla^g\log\widetilde\rho,\nabla^g h\bigr\rangle_g,
\qquad \forall h\in C^\infty(M),
\]
and comparing with \eqref{eq:c49-L-weighted} gives
\[
\bigl\langle \nabla^g\log\rho-\nabla^g\log\widetilde\rho,\nabla^g h\bigr\rangle_g=0
\qquad \forall h\in C^\infty(M).
\]
Since $\{\nabla^g h(x):h\in C^\infty(M)\}=T_xM$ for each $x$, it follows that
\[
\nabla^g\log\rho=\nabla^g\log\widetilde\rho,
\qquad\text{hence}\qquad
\log(\widetilde\rho/\rho)=\text{const on each connected component}.
\]
Therefore $\widetilde\mu=c\,\mu$ on each connected component for some constant $c>0$.

In particular, any two reconstructions from the same diffusion semigroup yield the same metric
and the same measure up to normalization, so the identity map $\mathrm{id}_M$ is a global
Riemannian isometry between them. Equivalently, the diffusion semigroup determines the isometry
class of $(M,g,\mu)$.
\end{proof}

\begin{remark}
This rigidity result is entirely intrinsic.
It relies only on the fact that the diffusion semigroup uniquely determines the local
tensorial data $(g,\nabla,R,\mu)$ and that these data transform naturally under changes
of coordinates.
No analytic or asymptotic information beyond the diffusion calculus is required.
\end{remark}

\subsection{Information-equivalence of diffusion systems}
\label{subsec:information-equivalence}

In this subsection we characterize when two diffusion systems determine the same
Riemannian geometry.
The formulation is entirely intrinsic and does not rely on entropy, KL divergences,
or small-time asymptotics.

\medskip
\noindent\textbf{Diffusion conjugacy.}
Let $(M,\mu,P_t)$ and $(M',\mu',P'_t)$ be smooth manifolds equipped with conservative,
symmetric, strongly local diffusion semigroups acting on $L^2(M,\mu)$ and $L^2(M',\mu')$,
respectively.
We say that the two diffusion systems are \emph{diffusion-equivalent} if there exists
a measurable bijection
\[
\Phi : M \longrightarrow M'
\]
such that:
\begin{enumerate}
\item[(i)] $\Phi_*\mu = \mu'$,
\item[(ii)] for every $t\ge0$ and every bounded measurable function $f$ on $M'$,
\begin{equation}\label{eq:semigroup-conjugacy}
P'_t f \circ \Phi = P_t(f\circ\Phi)
\quad \mu\text{-a.e.},
\end{equation}
\end{enumerate}
that is, $\Phi$ conjugates the diffusion semigroups.

\medskip
\noindent\textbf{Compatibility of the diffusion calculus.}
Condition \eqref{eq:semigroup-conjugacy} implies conjugacy of the generators and
Dirichlet forms, and hence of the carré du champ operators:
for all $f,g\in C^\infty(M')$,
\begin{equation}\label{eq:Gamma-conjugacy}
\Gamma(f\circ\Phi,\,g\circ\Phi)
=
\Gamma'(f,g)\circ\Phi
\quad \mu\text{-a.e.}
\end{equation}
Iterating the same argument yields conjugacy of the second-order diffusion calculus:
\[
\Gamma_2(f\circ\Phi)
=
\Gamma_2'(f)\circ\Phi.
\]

\medskip
\noindent\textbf{Equality of reconstructed geometric data.}
By Sections~4.1--4.4, the diffusion calculi $(\Gamma,\Gamma_2)$ and
$(\Gamma',\Gamma_2')$ determine uniquely the Riemannian metrics $g,g'$,
their Levi--Civita connections, curvature tensors, and reference measures.
Equation \eqref{eq:Gamma-conjugacy} therefore implies
\[
g = \Phi^* g',
\qquad
\nabla = \Phi^* \nabla',
\qquad
R = \Phi^* R',
\qquad
\mu = \Phi^* \mu'.
\]
In particular, $\Phi$ is a Riemannian isometry.

\begin{corollary}[Information-equivalence implies isometry]\label{cor:4.11}
If two diffusion systems $(M,\mu,(P_t)_{t\ge0})$ and $(M',\mu',(P'_t)_{t\ge0})$ are diffusion-equivalent,
then the reconstructed weighted Riemannian manifolds $(M,g,\mu)$ and $(M',g',\mu')$ are globally isometric:
there exists a measurable bijection $\Phi:M\to M'$ such that
\[
g=\Phi^*g',
\qquad
\mu=\Phi^*\mu',
\]
and in particular $\Phi$ is a Riemannian isometry from $(M,g)$ to $(M',g')$.

Conversely, any (smooth) weighted Riemannian isometry $\Phi:(M,g,\mu)\to(M',g',\mu')$ induces a diffusion-equivalence
of the associated symmetric strongly local diffusion semigroups.
\end{corollary}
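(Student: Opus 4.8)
The plan is to prove Corollary~\ref{cor:4.11} in two directions. For the forward direction, I would invoke the preceding analysis almost verbatim: starting from a diffusion-equivalence $\Phi$, the conjugacy of semigroups \eqref{eq:semigroup-conjugacy} passes to the generators (differentiate at $t=0$), to the Dirichlet forms via the polarization $\mathcal E(f,g)=-\langle Lf,g\rangle$, and hence to the carré du champ by the generator formula of Proposition~2.4, giving \eqref{eq:Gamma-conjugacy}; iterating through Proposition~4.3 gives conjugacy of $\Gamma_2$. Then Theorem~\ref{thm:recover-metric-from-Gamma} applied on both sides shows $g=\Phi^*g'$ (since $\Gamma$ determines the co-metric pointwise and $\Gamma$ is $\Phi$-conjugate), Theorems~\ref{thm:recover-connection} and~\ref{thm:recover-curvature} then force $\nabla=\Phi^*\nabla'$ and $R=\Phi^*R'$ because these are canonically built from $g$, and Theorem~\ref{thm:recover-measure} together with condition (i) $\Phi_*\mu=\mu'$ gives $\mu=\Phi^*\mu'$. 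Since a map pulling back $g'$ to $g$ is by definition a Riemannian isometry (once one knows it is a diffeomorphism), $\Phi$ is the asserted isometry.

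A subtlety I would address explicitly is the \emph{regularity} of $\Phi$: it is assumed only measurable, yet the conclusion "$\Phi$ is a Riemannian isometry" presupposes smoothness. The argument here is that $\Phi$ conjugates the smooth diffusion calculi, so it pulls back entropy-smooth charts to entropy-smooth charts in the sense of \cite{SanghaEntropySmooth}; equivalently, $\Phi$ intertwines the algebras of $\Gamma$-smooth functions, and since by Theorem~\ref{thm:recover-metric-from-Gamma} these algebras coincide with $C^\infty(M)$ and $C^\infty(M')$, the induced algebra isomorphism $f\mapsto f\circ\Phi$ is realized by a diffeomorphism (Milnor--Nash type recovery of a manifold from its smooth function algebra). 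Thus the measurable bijection $\Phi$ automatically upgrades to a smooth diffeomorphism, and the pullback identities above make sense in the classical tensorial sense. I would state this as the key regularity step and cite the companion paper.

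For the converse, suppose $\Phi:(M,g,\mu)\to(M',g',\mu')$ is a smooth weighted Riemannian isometry, meaning $\Phi$ is a diffeomorphism with $\Phi^*g'=g$ and $\Phi_*\mu=\mu'$ (equivalently $\Phi^*\rho'=\rho$ for the densities against the volume measures). Then $\Phi$ intertwines the Laplace--Beltrami operators, $\Delta_{g'}(f)\circ\Phi=\Delta_g(f\circ\Phi)$, and it intertwines the drift terms since $\nabla^{g'}\log\rho'$ pulls back to $\nabla^g\log\rho$; hence $\Phi$ conjugates the weighted Laplacians $L'=\Delta_{g'}+\langle\nabla\log\rho',\nabla\cdot\rangle$ and $L$. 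Because $\Phi_*\mu=\mu'$, the unitary $U:L^2(M',\mu')\to L^2(M,\mu)$, $Uf=f\circ\Phi$, intertwines the self-adjoint generators, so by the spectral theorem (or by uniqueness of the semigroup solving the abstract Cauchy problem) it intertwines the semigroups: $P'_tf\circ\Phi=P_t(f\circ\Phi)$, which is exactly \eqref{eq:semigroup-conjugacy}, and (i) holds by hypothesis. Hence the two systems are diffusion-equivalent.

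The main obstacle I expect is precisely the regularity bootstrap in the forward direction: turning the merely measurable conjugating map into a smooth diffeomorphism so that the tensorial pullback statements are meaningful. Everything else is a matter of chaining the already-established Theorems~\ref{thm:recover-metric-from-Gamma}--\ref{thm:recover-measure} with the observation that $\Phi$-conjugacy of $(P_t)$ propagates down the logical chain $P_t\to(\Gamma,\Gamma_2)\to(g,\nabla,R,\mu)$; I would handle the regularity point by the function-algebra argument sketched above, noting that strong locality and the smooth first-order calculus of Assumption~2.2 are exactly what make "$\Gamma$-smooth $=C^\infty$" available on both sides.
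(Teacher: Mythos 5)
Your proposal is correct and follows essentially the same route as the paper's own proof: semigroup conjugacy $\Rightarrow$ generator conjugacy (differentiate at $t=0$) $\Rightarrow$ $\Gamma$- and $\Gamma_2$-conjugacy via the generator formula of Proposition~2.4 $\Rightarrow$ pointwise equality of co-metrics $\Rightarrow$ $g=\Phi^*g'$ and $\mu=\Phi^*\mu'$; and for the converse, operator conjugacy plus the unitary pullback $Uf=f\circ\Phi$ and functional calculus. The one place you genuinely go beyond the paper is the regularity bootstrap: the paper's proof silently treats the merely measurable $\Phi$ as if it were smooth (writing $d(f\circ\Phi)$ and $\Phi^*g'$ without comment, and applying Proposition~2.4 to $f\circ\Phi$, which requires $f\circ\Phi$ to lie in the smooth core), whereas you correctly flag this as the main obstacle and sketch a function-algebra/entropy-chart upgrade via the companion paper. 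Your sketch of that upgrade is still only a sketch --- you would need to justify why semigroup conjugacy alone forces $f\circ\Phi\in C^\infty(M)$ for $f\in C^\infty(M')$ before the algebra-isomorphism argument can start --- but identifying and isolating this step is an improvement on the paper's treatment rather than a defect of yours.
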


\begin{proof}
Assume diffusion-equivalence, i.e.\ there exists a measurable bijection $\Phi:M\to M'$ such that
\begin{equation}\label{eq:c411-push}
\Phi_*\mu=\mu',
\end{equation}
and for every bounded measurable $f$ on $M'$ and all $t\ge0$,
\begin{equation}\label{eq:c411-semigroup-conj}
P'_t f\circ \Phi = P_t(f\circ \Phi)\qquad \mu\text{-a.e. on }M.
\end{equation}
Let $L,L'$ be the generators and $\E,\E'$ the symmetric Dirichlet forms, with carré du champ
$\Gamma,\Gamma'$:
\[
\E(u,v)=\int_M \Gamma(u,v)\,d\mu,\qquad
\E'(u,v)=\int_{M'} \Gamma'(u,v)\,d\mu'.
\]
From \eqref{eq:c411-semigroup-conj}, for $f\in\Dom(L')$ we have (in $L^2(\mu)$)
\[
\frac{P_t(f\circ\Phi)-f\circ\Phi}{t}
=
\frac{P'_t f-f}{t}\circ\Phi
\longrightarrow (L'f)\circ\Phi,
\]
hence
\begin{equation}\label{eq:c411-generator-conj}
L(f\circ\Phi)=(L'f)\circ\Phi\qquad \mu\text{-a.e.}
\end{equation}
For $f,g\in C^\infty(M')$ (in the smooth core of $L'$), apply Proposition~2.4 on $M$ and $M'$ and use
\eqref{eq:c411-generator-conj} plus $(fg)\circ\Phi=(f\circ\Phi)(g\circ\Phi)$:
\begin{align}
\Gamma(f\circ\Phi,g\circ\Phi)
&=\frac12\Bigl(L\bigl((fg)\circ\Phi\bigr)-(f\circ\Phi)L(g\circ\Phi)-(g\circ\Phi)L(f\circ\Phi)\Bigr)\nonumber\\
&=\frac12\Bigl((L'(fg))\circ\Phi-(f\circ\Phi)(L'g)\circ\Phi-(g\circ\Phi)(L'f)\circ\Phi\Bigr)\nonumber\\
&=\Bigl(\frac12\bigl(L'(fg)-fL'g-gL'f\bigr)\Bigr)\circ\Phi
=\Gamma'(f,g)\circ\Phi,
\label{eq:c411-Gamma-conj}
\end{align}
$\mu$-a.e. on $M$. Polarization gives the same for all pairs, and the same argument (or Proposition~4.3)
yields the conjugacy of $\Gamma_2$:
\begin{equation}\label{eq:c411-Gamma2-conj}
\Gamma_2(f\circ\Phi)=\Gamma'_2(f)\circ\Phi \qquad \mu\text{-a.e.}
\end{equation}

Let $g,g'$ be the reconstructed metrics from Theorem~4.1, so that
\[
\Gamma(u,v)=\langle du,dv\rangle_{g^{-1}},
\qquad
\Gamma'(f,g)=\langle df,dg\rangle_{(g')^{-1}}.
\]
Then \eqref{eq:c411-Gamma-conj} implies, for all $f,g\in C^\infty(M')$,
\begin{equation}\label{eq:c411-cometric-pullback}
\bigl\langle d(f\circ\Phi),d(g\circ\Phi)\bigr\rangle_{g^{-1}}
=
\bigl\langle df,dg\bigr\rangle_{(g')^{-1}}\circ\Phi
=
\bigl\langle d(f\circ\Phi),d(g\circ\Phi)\bigr\rangle_{(\Phi^*(g'))^{-1}}
\qquad \mu\text{-a.e.}
\end{equation}
Fix $x\in M$. Varying $f,g$ with prescribed differentials at $\Phi(x)$ shows that the bilinear forms on
$T_x^*M$ defined by $g^{-1}_x$ and $(\Phi^*(g'))^{-1}_x$ coincide; hence
\begin{equation}\label{eq:c411-metric-pullback}
g=\Phi^*g'.
\end{equation}
Together with \eqref{eq:c411-push}, this yields $\mu=\Phi^*\mu'$, so $\Phi$ is a weighted Riemannian isometry
between the reconstructed objects.

Conversely, suppose $\Phi:(M,g,\mu)\to(M',g',\mu')$ is a smooth weighted Riemannian isometry, i.e.
\[
g=\Phi^*g',\qquad \Phi_*\mu=\mu'.
\]
Let $L=\Delta_g+\langle \nabla\log\rho,\nabla\cdot\rangle_g$ and
$L'=\Delta_{g'}+\langle \nabla'\log\rho',\nabla'\cdot\rangle_{g'}$ be the corresponding symmetric generators.
From $g=\Phi^*g'$ and $\Phi_*\mu=\mu'$ one has the operator conjugacy on $C^\infty(M')$,
\[
L(f\circ\Phi)=(L'f)\circ\Phi,
\]
equivalently $U^{-1}LU=L'$ where $U:L^2(\mu')\to L^2(\mu)$ is the unitary pullback $Uf=f\circ\Phi$.
By functional calculus (or uniqueness of the $C_0$-semigroup generated by $L'$),
\[
U^{-1}P_tU=P'_t,
\qquad\text{i.e.}\qquad
P'_t f\circ\Phi=P_t(f\circ\Phi)
\]
for all bounded measurable $f$ and $t\ge0$, which is diffusion-equivalence.
\end{proof}

\begin{remark}
Thus, up to natural conjugacy, a symmetric strongly local diffusion semigroup
contains exactly the same information as the weighted Riemannian manifold
$(M,g,\mu)$ reconstructed from it.
In this sense, the diffusion calculus $(\Gamma,\Gamma_2)$ provides a complete
information-theoretic invariant of the Riemannian isometry class.
\end{remark}

\section{Conclusion}

We have shown that a symmetric, strongly local diffusion semigroup satisfying a smooth diffusion calculus intrinsically
determines a weighted Riemannian manifold.
Starting from the diffusion operator and its associated first- and second-order
calculus, we reconstructed the Riemannian metric, the Levi--Civita connection,
the curvature tensor, and the reference measure, and proved that this data is
unique up to global isometry.

A central outcome of the work is that no geometric structure needs to be assumed
in advance.
The metric emerges from the first-order diffusion calculus, curvature emerges
from the second-order calculus, and symmetry of the diffusion fixes the canonical
connection and measure.
All reconstructions are intrinsic and coordinate-free, relying only on the
properties of the diffusion semigroup itself.

This perspective suggests a conceptual shift in how differential geometry can
be understood.
Rather than viewing diffusion as a consequence of geometry, geometry may be
regarded as the structure that organizes the flow of information under diffusion.
In this sense, the diffusion calculus provides a complete invariant of the
Riemannian isometry class.

The framework developed here opens several directions for further investigation.
One may ask to what extent similar reconstruction results hold for more general
diffusion operators, for non-smooth settings, or for discrete and noncommutative
analogues.
More broadly, the results point toward an information-theoretic foundation of
geometry in which geometric notions arise from universal properties of
information flow.

\section*{Acknowledgements}
The author received no funding for this work, and declares no competing interests.

\bibliographystyle{unsrt}
\bibliography{refs.bib}

\end{document}